\newtheorem{defn}{Definition}[section]
\newtheorem{thm}[defn]{Theorem}
\newtheorem{lem}[defn]{Lemma}
\newtheorem{prop}[defn]{Proposition}
\newtheorem{cor}[defn]{Corollary}
\def\ad{{\rm ad}}
\def\dim{{{\rm dim}}}
\def\ch{{{\rm ch}}}
\begin{document}
\title{{\bf The construction of 3-Bihom-Lie algebras}}
\author{\normalsize \bf Juan Li, Liangyun Chen}
\date{{{\small{ School of Mathematics and Statistics,
  Northeast Normal University,\\
   Changchun 130024, China}}}} \maketitle
\date{}

{{\bf\begin{center}{Abstract}\end{center}}

The purpose of this paper is to study the construction of
$3$-Bihom-Lie algebras. We give some ways of constructing $3$-Bihom-Lie algebras from $3$-Bihom-Lie algebras and $3$-totally Bihom-associative algebras. Furthermore, we introduce $T_\theta$-extensions and $T_\theta^*$-extensions of $3$-Bihom-Lie algebras and prove the necessary and sufficient
conditions for a $2n$-dimensional quadratic $3$-Bihom-Lie algebra to be isomorphic to
a $T_\theta^*$-extension.

\noindent\textbf{Keywords:} $3$-Bihom-Lie algebras,
representations,  $T_\theta$-extensions, $T_\theta^*$-extensions

\noindent{\textbf{MSC(2010):}}  17A40, 17A45, 17B10

\renewcommand{\thefootnote}{\fnsymbol{footnote}}
\footnote[0]{ Corresponding author (L. Chen): chenly640@nenu.edu.cn.}
\footnote[0]{ Supported by NNSF of China (No. 11771069) and NSF of Jilin province (No. 20170101048JC).}

%----------------------------------------------------------------------------------------------------------------------
 \section{Introduction}
%----------------------------------------------------------------------------------------------------------------------

Bihom-algebras first appeared in \cite{GMMP} while investigating categorical of Hom-algebras. This class of algebras is an algebra with two homomorphisms $\alpha$ and $\beta$. When $\alpha=\beta$, Bihom-algebras will be return to Hom-algebras. And when $\alpha=\beta$ are identity maps, Bihom-algebras will be return to algebras. Since then many authors are interested in Bihom-algebras, such as Bihom-Lie algebras, Bihom-Lie superalgebras, Bihom-Lie colour algebras  in \cite{KAA, CQ, JLY, WG}. In particular, the definition of $n$-Bihom-Lie algebras and $n$-Bihom-Associative algebras were introduced in \cite{AAS}. Then the generalized derivations of $3$-BiHom-Lie superalgebras and $n$-BiHom-Lie algebras are studied in \cite{AMIA, ASO}.

The notion of the $T_\theta^*$-extension of Lie
algebras was introduced by Bordemann in \cite{MB}. It is one of the main
tools to prove that every symplectic quadratic Lie algebra is a special symplectic Manin
algebra in \cite{BBM}. Then this idea is
applied to generalize other algebras and the resulting algebras are studied, for example, $3$-Lie algebras, $3$-Hom-Lie algebras, Hom-Lie algebras and Bihom-Lie superalgebras etc. in \cite{RWYZ, BM, JLB, LCM, YLY, ZCM}. In this paper, we study the $T_\theta^*$-extension of $3$-Bihom-Lie algebras.

The paper is organized as follows. In Section 2 we recall the definition of $3$-Bihom-Lie algebras, and show that a $3$-Bihom-Lie algebra is given by the direct sum of two $3$-Bihom-Lie algebras and the tensor product of a $3$-totally Bihom-associative algebra and a $3$-Bihom-Lie algebra. Also we prove that a  homomorphism between $3$-Bihom-Lie algebras is a morphism if and only if
its graph is a Bihom subalgebra. In Section 3 we give the definition of representations of $3$-Bihom-Lie algebras. We can obtain the semidirect product $3$-Bihom-Lie algebra $(L\oplus M, [\cdot, \cdot, \cdot]_{\rho}, \alpha+\alpha_M,\beta+\beta_M)$ associated with any representation $\rho$ of a $3$-Bihom-Lie algebra $(L, [\cdot, \cdot, \cdot], \alpha, \beta)$ on $M$. And we can get a $T_\theta$-extension of $(L, [\cdot, \cdot, \cdot], \alpha, \beta)$ by a $3$-cocycle $\theta$. In Section 4 $T_\theta^*$-extensions of $3$-Bihom-Lie algebras are studied. We give the necessary and sufficient
conditions for a $2n$-dimensional quadratic $3$-Bihom-Lie algebra to be isomorphic to a $T_\theta^*$-extension.

%----------------------------------------------------------------------------------------------------------------------
 \section{Definitions and derivations of $3$-Bihom-Lie algebras}
%----------------------------------------------------------------------------------------------------------------------
Inspired by \cite{AMS, YD}, we give some construction of $3$-Bihom-Lie algebras, and similar conclusions are deduced.
\begin{defn}\cite{AAS}
A $3$-Bihom-Lie algebra over a field $\mathbb{K}$ is a
$4$-tuple $(L,[\cdot,\cdot,\cdot],\alpha,\beta)$, where $L$ is a vector
space, $3$-linear operator $[\cdot,\cdot,\cdot]
\colon L\times L\times L\rightarrow L$ and two linear maps $\alpha,\beta \colon L\rightarrow L$ satisfying the following conditions, $\forall\, x,y,z,u,v\in L$,
\begin{enumerate}[(1)]
\item $\alpha\circ\beta=\beta\circ\alpha$,
\item $\alpha ([x,y,z])=[\alpha(x),\alpha(y),\alpha(z)]$ and
  $\beta([x,y,z])=[\beta(x) ,\beta(y),\beta(z)]$,
\item Bihom-skewsymmetry: $[\beta(x),\beta(y),\alpha(z)]=-[\beta(y),\beta(x),\alpha(z)]=-[\beta(x),\beta(z),\alpha(y)],$
\item $3$-BiHom-Jacobi identity:
\begin{eqnarray*}
 &&[\beta^{2}(u),\beta^{2}(v),[\beta(x),\beta(y),\alpha(z)]]\\
 &=&[\beta^{2}(y),\beta^{2}(z),[\beta(u),\beta(v),\alpha(x)]]-[\beta^{2}(x),\beta^{2}(z),[\beta(u),\beta(v),\alpha(y)]]\\
 & &+[\beta^{2}(x),\beta^{2}(y),[\beta(u),\beta(v),\alpha(z)]].
\end{eqnarray*}
\end{enumerate}
\end{defn}
A $3$-Bihom-Lie algebra is called a \textit{regular} $3$-Bihom-Lie algebra if $\alpha$ and $\beta$ are algebra automorphisms.

Obviously, a $3$-Hom-Lie algebra $(L,[\cdot,\cdot,\cdot],\alpha)$ is a particular case of $3$-Bihom-Lie algebras, namely, $(L,[\cdot,\cdot,\cdot],\alpha,\alpha)$. Conversely, a $3$-Bihom-Lie algebra $(L,[\cdot,\cdot,\cdot],\alpha,\alpha)$ with isomorphism $\alpha$ is a $3$-Hom-Lie algebra $(L,[\cdot,\cdot,\cdot],\alpha)$.

\begin{defn}\cite{AAS}
 A sub-vector space $\eta\subseteq L$ is a Bihom subalgebra of $(L,[\cdot,\cdot, \cdot],\alpha,\beta)$ if $\alpha(\eta)\in \eta$, $\beta(\eta)\in \eta$ and $[x,y,z]\in \eta,~\forall\, x,y,z\in \eta$.
  It is said to be a Bihom ideal of $(L,[\cdot,\cdot,\cdot],\alpha,\beta)$ if $\alpha(\eta)\in \eta$, $\beta(\eta)\in \eta$ and $[x,y,z]\in \eta,~\forall\, x\in \eta,y,z\in L.$
\end{defn}

\begin{prop}\cite[Theorem 1.12]{AAS}
Let $(L, [\cdot, \cdot, \cdot])$ be a $3$-Lie algebra, and $\alpha, \beta : L\rightarrow L$ be algebraic homomorphisms such that $\alpha\circ\beta=\beta\circ\alpha$. The $(L, [\cdot,\cdot, ]_{\alpha\beta}, \alpha, \beta)$, where $[\cdot,\cdot, ]_{\alpha\beta}$ is defined
by
$[x, y, z]_{\alpha\beta}=[\alpha(x), \alpha(y), \beta(z)]$, $\forall\, x,y,z\in L$,
is a $3$-BiHom-Lie algebra.
\end{prop}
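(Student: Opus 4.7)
The plan is to verify the four axioms of a $3$-Bihom-Lie algebra in turn for the triple $(L,[\cdot,\cdot,\cdot]_{\alpha\beta},\alpha,\beta)$, reducing each one to a known identity in the underlying $3$-Lie algebra $(L,[\cdot,\cdot,\cdot])$ together with the commutation relation $\alpha\circ\beta=\beta\circ\alpha$.

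Axiom (1) is given by hypothesis. For axiom (2), I would expand
$\alpha([x,y,z]_{\alpha\beta})=\alpha[\alpha(x),\alpha(y),\beta(z)]=[\alpha^{2}(x),\alpha^{2}(y),\alpha\beta(z)]$
using that $\alpha$ is a $3$-Lie homomorphism, and then rewrite this as $[\alpha^{2}(x),\alpha^{2}(y),\beta\alpha(z)]=[\alpha(x),\alpha(y),\alpha(z)]_{\alpha\beta}$ via the commutation $\alpha\beta=\beta\alpha$. The analogous computation works for $\beta$. For axiom (3), I would unfold
$[\beta(x),\beta(y),\alpha(z)]_{\alpha\beta}=[\alpha\beta(x),\alpha\beta(y),\beta\alpha(z)]$
and simply invoke the total antisymmetry of the $3$-Lie bracket in its first two and in its last two slots, which transfers directly to the twisted bracket.

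For axiom (4), the $3$-BiHom-Jacobi identity, the key observation is that, using $\alpha\beta=\beta\alpha$ and that $\beta$ is a homomorphism of $[\cdot,\cdot,\cdot]$, every inner and outer application of $[\cdot,\cdot,\cdot]_{\alpha\beta}$ contributes the same composite $\gamma:=\alpha\beta^{2}$ to each argument. Concretely, I would rewrite
\begin{align*}
[\beta^{2}(u),\beta^{2}(v),[\beta(x),\beta(y),\alpha(z)]_{\alpha\beta}]_{\alpha\beta}
=[\gamma(u),\gamma(v),[\gamma(x),\gamma(y),\gamma(z)]],
\end{align*}
and similarly each of the three terms on the right-hand side of the $3$-BiHom-Jacobi identity reduces to an expression of the form $[\gamma(\cdot),\gamma(\cdot),[\gamma(\cdot),\gamma(\cdot),\gamma(\cdot)]]$ in the original $3$-Lie bracket. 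Then I would apply the Filippov identity
\begin{align*}
[a,b,[c,d,e]]=[[a,b,c],d,e]+[c,[a,b,d],e]+[c,d,[a,b,e]]
\end{align*}
to $a=\gamma(u)$, $b=\gamma(v)$, $c=\gamma(x)$, $d=\gamma(y)$, $e=\gamma(z)$, and use the total antisymmetry of the $3$-Lie bracket to move each inner bracket into the third slot so that the three resulting terms match exactly the right-hand side of the $3$-BiHom-Jacobi identity (with the correct signs).

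The routine steps are (1)--(3); the main obstacle is the bookkeeping in step (4), where one must consistently absorb the twists $\alpha$ and $\beta$ into the single composite $\gamma=\alpha\beta^{2}$ and then use total antisymmetry to align the outputs of Filippov's identity with the terms required by the $3$-BiHom-Jacobi identity. Once this rewriting is done the identity is immediate.
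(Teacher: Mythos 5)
Your proposal is correct: axioms (1)--(3) reduce exactly as you describe, and for the $3$-BiHom-Jacobi identity the key computation indeed works --- since $\alpha\beta=\beta\alpha$ and both maps are homomorphisms, every slot in each nested twisted bracket acquires the single twist $\gamma=\alpha\beta^{2}$, after which the Filippov identity $[a,b,[c,d,e]]=[[a,b,c],d,e]+[c,[a,b,d],e]+[c,d,[a,b,e]]$ together with total antisymmetry (an even cyclic shift for the first term, one transposition for the second) yields precisely $[d,e,[a,b,c]]-[c,e,[a,b,d]]+[c,d,[a,b,e]]$, matching the required right-hand side with the correct signs. The paper does not reprove this proposition (it is quoted from \cite{AAS}), but your argument is the standard one and follows the same twisting strategy the paper itself uses in the proof of Proposition \ref{ppp}.
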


\begin{prop}
\label{ppp}
Let $(L,[\cdot,\cdot,\cdot], \alpha, \beta)$ be a $3$-BiHom-Lie algebra,  $\alpha^{'} ,\beta^{'} \colon L\rightarrow L$ be two algebraic homomorphisms and any two of the maps $\alpha, \beta, \alpha^{'} ,\beta^{'}$ commute. Then $(L, [\cdot,\cdot,\cdot]_{\alpha^{'},\beta^{'}}$ $:=[\cdot,\cdot,\cdot]\circ(\alpha^{'}\otimes\alpha^{'}\otimes\beta^{'}),\alpha\circ\alpha^{'} ,
\beta\circ\beta^{'} )$ is a $3$-Bihom-Lie algebra.
\end{prop}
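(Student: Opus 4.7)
The plan is to verify the four axioms of a $3$-Bihom-Lie algebra directly for the quadruple $(L, [\cdot,\cdot,\cdot]_{\alpha',\beta'}, \alpha\circ\alpha', \beta\circ\beta')$, using at every step the hypothesis that any two of $\alpha, \beta, \alpha', \beta'$ commute, together with the fact that $\alpha'$ and $\beta'$ are algebra homomorphisms with respect to the original bracket $[\cdot,\cdot,\cdot]$. I expect the two multiplicativity conditions and the new commutation $\alpha\alpha'\circ\beta\beta' = \beta\beta'\circ\alpha\alpha'$ to be immediate from commutativity.

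First I would dispatch axiom (1): $(\alpha\alpha')(\beta\beta') = \alpha\beta\alpha'\beta' = \beta\alpha\beta'\alpha' = (\beta\beta')(\alpha\alpha')$, using pairwise commutativity. For axiom (2), applying $\alpha\circ\alpha'$ to $[x,y,z]_{\alpha',\beta'} = [\alpha'(x),\alpha'(y),\beta'(z)]$ and using that both $\alpha$ and $\alpha'$ are homomorphisms for $[\cdot,\cdot,\cdot]$ yields $[\alpha\alpha'\alpha'(x),\alpha\alpha'\alpha'(y),\alpha\alpha'\beta'(z)]$; reordering the inner compositions gives $[\alpha'(\alpha\alpha'(x)),\alpha'(\alpha\alpha'(y)),\beta'(\alpha\alpha'(z))] = [\alpha\alpha'(x),\alpha\alpha'(y),\alpha\alpha'(z)]_{\alpha',\beta'}$, as required; the verification for $\beta\circ\beta'$ is symmetric.

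For axiom (3), Bihom-skewsymmetry, I would write out
\begin{eqnarray*}
[\beta\beta'(x),\beta\beta'(y),\alpha\alpha'(z)]_{\alpha',\beta'}
&=& [\alpha'\beta\beta'(x),\alpha'\beta\beta'(y),\beta'\alpha\alpha'(z)] \\
&=& [\beta(\alpha'\beta'(x)),\beta(\alpha'\beta'(y)),\alpha(\alpha'\beta'(z))],
\end{eqnarray*}
using commutativity to move $\beta$ and $\alpha$ to the outside of the inner arguments. Then the original Bihom-skewsymmetry of $[\cdot,\cdot,\cdot]$ applied to $\alpha'\beta'(x),\alpha'\beta'(y),\alpha'\beta'(z)$ gives the two required sign changes, which translate back into the new bracket.

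The main obstacle, as usual in these twisting results, will be axiom (4), the $3$-BiHom-Jacobi identity. The strategy is to expand each of the four brackets $[(\beta\beta')^2(u),(\beta\beta')^2(v),[\beta\beta'(x),\beta\beta'(y),\alpha\alpha'(z)]_{\alpha',\beta'}]_{\alpha',\beta'}$ and its three companions according to the definition, then repeatedly commute the maps $\alpha,\beta,\alpha',\beta'$ past one another and pull $\alpha'$ and $\beta'$ outside via their homomorphism property with respect to $[\cdot,\cdot,\cdot]$. After these manipulations each term will become an expression of the form $[\beta^2(-),\beta^2(-),[\beta(-),\beta(-),\alpha(-)]]$ in the original bracket, with arguments of the shape $(\alpha')^i(\beta')^j$ applied to $u,v,x,y,z$ for appropriate exponents $i,j$ that match across all four terms. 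At that point the original $3$-Bihom-Jacobi identity for $(L,[\cdot,\cdot,\cdot],\alpha,\beta)$, applied to the elements $\alpha'\beta'(u),\alpha'\beta'(v),\alpha'\beta'(x),\alpha'\beta'(y),\alpha'\beta'(z)$, closes the argument. The bookkeeping of the exponents of $\alpha'$ and $\beta'$ is the only delicate point, and it is forced by the symmetry of the definition $[\cdot,\cdot,\cdot]_{\alpha',\beta'}=[\cdot,\cdot,\cdot]\circ(\alpha'\otimes\alpha'\otimes\beta')$.
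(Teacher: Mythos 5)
Your proposal takes essentially the same route as the paper's proof: check the commutation and multiplicativity of $\alpha\circ\alpha'$ and $\beta\circ\beta'$, reduce Bihom-skewsymmetry to that of $[\cdot,\cdot,\cdot]$ by commuting the twisting maps into the arguments, and reduce the $3$-Bihom-Jacobi identity to the original one by pulling $\alpha'$ and $\beta'$ through the brackets. The only slip is in the final bookkeeping: the outer twisted bracket applies an extra $\beta'$ to its third slot, so each of the four terms equals $\alpha'\circ(\beta')^{2}$ applied to the corresponding original Jacobi term, i.e.\ the original identity must be instantiated at $\alpha'(\beta')^{2}(u),\dots,\alpha'(\beta')^{2}(z)$ rather than at $\alpha'\beta'(u),\dots$ — harmless, since the identity holds for all elements, but the exponents you cite should be corrected.
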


\begin{proof}
First we check that the bracket product $[\cdot,\cdot,\cdot]_{\alpha^{'},\beta^{'}}$ is compatible with the structure
maps $\alpha\circ\alpha^{'}$ and
$\beta\circ\beta^{'} $. For any  $x, y, z \in L$, we have
\begin{eqnarray*}
[\beta\circ\beta^{'}(x),\beta\circ\beta^{'}(y),\alpha\circ\alpha^{'}(z)]_{\alpha^{'},\beta^{'}}&=& [\alpha^{'}\circ\beta\circ\beta^{'}(x),\alpha^{'}\circ\beta\circ\beta^{'}(y),\beta^{'}\circ\alpha\circ\alpha^{'}(z)]\\
&=& \alpha^{'}\circ\beta^{'}([\beta(x),\beta(y),\alpha(z)])\\
&=& -\alpha^{'}\circ\beta^{'}([\beta(y),\beta(x),\alpha(z)])\\
&=& -[\alpha^{'}\circ\beta^{'}\circ\beta(y),\alpha^{'}\circ\beta^{'}\circ\beta(x),\alpha^{'}\circ\beta^{'}\circ\alpha(z)]\\
&=& -[\beta\circ\beta^{'}(y),\beta\circ\beta^{'}(x),\alpha\circ\alpha^{'}(z)]_{\alpha^{'},\beta^{'}}.
\end{eqnarray*}
In the same way, $[\beta\circ\beta^{'}(x),\beta\circ\beta^{'}(y),\alpha\circ\alpha^{'}(z)]_{\alpha^{'},\beta^{'}}=-[\beta\circ\beta^{'}(x),\beta\circ\beta^{'}(z),\alpha\circ\alpha^{'}(y)]_{\alpha^{'},\beta^{'}}$.

Now we prove the $3$-Bihom-Jacobi identity. For any $x, y, z, u, v\in L$, we have
\begin{eqnarray*}
 &&[(\beta\circ\beta^{'})^2(u),(\beta\circ\beta^{'})^2(v),[\beta\circ\beta^{'}(x),\beta\circ\beta^{'}(y),\alpha\circ\alpha^{'}(z)]_{\alpha^{'},\beta^{'}}]_{\alpha^{'},\beta^{'}}\\
&=& [(\beta\circ\beta^{'})^2(u),(\beta\circ\beta^{'})^2(v), \alpha^{'}\circ\beta^{'}([\beta(x),\beta(y),\alpha(z)])]_{\alpha^{'},\beta^{'}}\\
&=& [\alpha^{'}\circ(\beta\circ\beta^{'})^2(u),\alpha^{'}\circ(\beta\circ\beta^{'})^2(v), \alpha^{'}\circ(\beta^{'})^2([\beta(x),\beta(y),\alpha(z)])]\\
&=& \alpha^{'}\circ(\beta^{'})^2([\beta^2(u),\beta^2(v),[\beta(x),\beta(y),\alpha(z)]]).
\end{eqnarray*}
By the $3$-Bihom-Jacobi identity of $[\cdot,\cdot,\cdot]$, we can obtain $[\cdot,\cdot,\cdot]_{\alpha^{'},\beta^{'}}$ satisfies the $3$-Bihom-Jacobi identity.

Thus, $(L, [\cdot,\cdot,\cdot]_{\alpha^{'},\beta^{'}}:=[\cdot,\cdot,\cdot]\circ(\alpha^{'}\otimes\alpha^{'}\otimes\beta^{'}), \alpha\circ\alpha^{'},
\beta\circ\beta^{'} )$ is a $3$-Bihom-Lie algebra.
\end{proof}

\begin{cor}
Let $(L,[\cdot,\cdot,\cdot], \alpha, \beta)$ be a $3$-BiHom-Lie algebra. Then $(L, [\cdot,\cdot,\cdot]_k:=[\cdot,\cdot,\cdot]\circ(\alpha^{k}\otimes\alpha^{k}\otimes\beta^{k}), \alpha^{k+1} ,
\beta^{k+1})$ is a $3$-Bihom-Lie algebra.
\end{cor}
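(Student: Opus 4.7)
The plan is to deduce the corollary as a direct specialization of Proposition \ref{ppp}, taking $\alpha' := \alpha^{k}$ and $\beta' := \beta^{k}$. With this choice the bracket in Proposition \ref{ppp} becomes
\[
[\cdot,\cdot,\cdot]_{\alpha',\beta'} \;=\; [\cdot,\cdot,\cdot]\circ(\alpha^{k}\otimes\alpha^{k}\otimes\beta^{k}) \;=\; [\cdot,\cdot,\cdot]_{k},
\]
and the twist maps become $\alpha\circ\alpha^{k}=\alpha^{k+1}$ and $\beta\circ\beta^{k}=\beta^{k+1}$, which are exactly the data in the corollary. So the entire task reduces to checking that the hypotheses of Proposition \ref{ppp} are satisfied for this choice.

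First I would verify that $\alpha^{k}$ and $\beta^{k}$ are algebra homomorphisms of $(L,[\cdot,\cdot,\cdot])$. This is an easy induction on $k$: the base case $k=0$ gives the identity, and if $\alpha^{k}$ preserves the bracket then
\[
\alpha^{k+1}([x,y,z])=\alpha\bigl(\alpha^{k}([x,y,z])\bigr)=\alpha\bigl([\alpha^{k}(x),\alpha^{k}(y),\alpha^{k}(z)]\bigr)=[\alpha^{k+1}(x),\alpha^{k+1}(y),\alpha^{k+1}(z)],
\]
using the homomorphism property of $\alpha$ from condition (2) of the definition. The same argument handles $\beta^{k}$.

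Next I would check the pairwise commutativity requirement for the four maps $\alpha$, $\beta$, $\alpha^{k}$, $\beta^{k}$. From condition (1) we have $\alpha\circ\beta=\beta\circ\alpha$, and this immediately implies by induction that $\alpha^{i}\circ\beta^{j}=\beta^{j}\circ\alpha^{i}$ for all $i,j\geq 0$; clearly $\alpha$ commutes with $\alpha^{k}$ and $\beta$ commutes with $\beta^{k}$ trivially. Hence every pair among $\{\alpha,\beta,\alpha^{k},\beta^{k}\}$ commutes.

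I do not anticipate any serious obstacle here, since the statement is a straightforward specialization and no new structural identity needs to be verified beyond what Proposition \ref{ppp} already delivers. The only point to be slightly careful about is the induction step for the homomorphism property, but this is routine; once the hypotheses are in place, Proposition \ref{ppp} supplies the Bihom-skewsymmetry and the $3$-Bihom-Jacobi identity of $[\cdot,\cdot,\cdot]_{k}$ with respect to $\alpha^{k+1}$ and $\beta^{k+1}$ for free, completing the proof.
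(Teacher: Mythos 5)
Your proposal is correct and follows exactly the paper's route: the paper proves the corollary by applying Proposition \ref{ppp} with $\alpha'=\alpha^{k}$, $\beta'=\beta^{k}$, just as you do. Your extra verification that $\alpha^{k},\beta^{k}$ are commuting homomorphisms is a routine detail the paper leaves implicit.
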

\begin{proof}
Apply Proposition \ref{ppp} with $\alpha^{'}=\alpha^k$ and $\beta^{'}=\beta^k$.
\end{proof}

\begin{defn}\cite{AAS}
A $3$-totally BiHom-associative algebra is a vector space $A$ together with
a $3$-linear map $\mu:A\times A\times A\rightarrow A$ and two linear maps $\alpha, \beta:A\rightarrow A$, with notation $\mu(a_1,a_2,a_3)=a_1a_2a_3$, satisfying the following conditions, $\forall\, a_1,a_2,a_3,a_4,a_5\in A$,
\begin{enumerate}[(1)]
\item $\alpha\circ\beta=\beta\circ\alpha$,
\item $\alpha (a_1a_2a_3)=\alpha(a_1)\alpha(a_2)\alpha(a_3)$ and
  $\beta (a_1a_2a_3)=\beta(a_1)\beta(a_2)\beta(a_3)$,
\item $(a_1a_2a_3)\beta(a_4)\beta(a_5)=\alpha(a_1)(a_2a_3a_4)\beta(a_5)=\alpha(a_1)\alpha(a_2)(a_3a_4a_5).$
\end{enumerate}
\end{defn}

\begin{prop}
Let $(A, \mu, \alpha_1, \beta_1)$ be a $3$-totally BiHom-associative algebra and $(L,[\cdot,\cdot,\cdot],\\ \alpha_2, \beta_2)$ a $3$-BiHom-Lie algebra. If $\alpha_1$ is surjective and $\forall\, a_1,a_2,a_3\in A$,
\begin{eqnarray}\label{asso}
\beta_1(a_1)\beta_1(a_2)\alpha_1(a_3)=\beta_1(a_2)\beta_1(a_1)\alpha_1(a_3)=\beta_1(a_1)\beta_1(a_3)\alpha_1(a_2).
\end{eqnarray}
Then $(A\otimes L, [\cdot ,\cdot, \cdot]_{A\otimes L}, \alpha, \beta)$ is a $3$-BiHom-Lie algebra, where the $3$-linear map
 $[\cdot, \cdot, \cdot]_{A\otimes L} :
 \wedge^{3}(A\otimes L)\rightarrow A\otimes L$  is given by
$$[a_1\otimes x_1, a_2\otimes x_2, a_3\otimes x_3]_{A\otimes L}
= a_1a_2a_3\otimes[x_{1}, x_{2}, x_{3}],
~\forall\, a_{i}\in A, x_{i}\in L,i=1,2,3,$$
and the two linear maps $\alpha,
\beta:A\otimes L
\rightarrow A\otimes L$ are given by
$\alpha(a_1\otimes x_1)=\alpha_1(a_1)\otimes\alpha_2(x_1)$ and
$\beta(a_1\otimes x_1)=\beta_1(a_1)\otimes\beta_2(x_1).$
\end{prop}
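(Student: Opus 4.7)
The plan is to verify the four axioms of a $3$-Bihom-Lie algebra for $(A\otimes L,[\cdot,\cdot,\cdot]_{A\otimes L},\alpha,\beta)$ on pure tensors $a_i\otimes x_i$ and then extend by linearity. Axioms~(1) and~(2)---namely $\alpha\circ\beta=\beta\circ\alpha$ and the multiplicativity of $\alpha,\beta$ with respect to the triple bracket---are routine and follow componentwise from the analogous properties of $\alpha_i,\beta_i$ on $A$ and on $L$. For the Bihom-skewsymmetry axiom~(3), the expression $[\beta(X),\beta(Y),\alpha(Z)]_{A\otimes L}$ splits into the $A$-factor $\beta_1(a_1)\beta_1(a_2)\alpha_1(a_3)$ tensored with the $L$-factor $[\beta_2(x_1),\beta_2(x_2),\alpha_2(x_3)]$; the hypothesis~(\ref{asso}) is precisely the statement that this $A$-factor is invariant under the swaps $1\leftrightarrow 2$ and $2\leftrightarrow 3$, so both required sign identities come from the Bihom-skewsymmetry of $[\cdot,\cdot,\cdot]$ in $L$.

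The main obstacle is the $3$-Bihom-Jacobi identity~(4). Writing $U=b_1\otimes u$, $V=b_2\otimes v$, $X=a_1\otimes x$, $Y=a_2\otimes y$, $Z=a_3\otimes z$, the $L$-tensor factor of the left-hand side coincides with the LHS of the Jacobi identity in $L$, so by hypothesis it equals the alternating sum of the three $L$-factors appearing on the right. Hence it suffices to prove that the $A$-factor of the LHS,
\begin{equation*}
\beta_1^{2}(b_1)\beta_1^{2}(b_2)\bigl(\beta_1(a_1)\beta_1(a_2)\alpha_1(a_3)\bigr),
\end{equation*}
is equal to each of the three $A$-factors $\beta_1^{2}(a_i)\beta_1^{2}(a_j)(\beta_1(b_1)\beta_1(b_2)\alpha_1(a_k))$ obtained by interchanging the two outer variables $b_1,b_2$ with two of the inner variables $a_1,a_2,a_3$; once this coincidence is established, factoring out the common $A$-factor reduces the Jacobi identity in $A\otimes L$ to the Jacobi identity in $L$.

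To prove those four-fold coincidences, my strategy is as follows. First, use the multiplicativity and the surjectivity of $\alpha_1$ to rewrite the inner product $\beta_1(a_1)\beta_1(a_2)\alpha_1(a_3)$ in the form $\alpha_1(h)$; the outer triple then has the shape $\beta_1(\cdot)\beta_1(\cdot)\alpha_1(\cdot)$ required by~(\ref{asso}), which permits free permutation of the three arguments $\beta_1(b_1),\beta_1(b_2),h$. Second, use the three equivalent forms of the $3$-totally Bihom-associativity axiom to shift which of the five variables sits inside the nested bracket, with surjectivity of $\alpha_1$ supplying the preimages needed to match the $\alpha_1(\cdot)$ slots required by the axiom. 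Iterating these two moves should bring each of the four $A$-factors to a common flat expression in $b_1,b_2,a_1,a_2,a_3$; the key technical point, and the place where I expect the computation to be most delicate, is bookkeeping the $\alpha_1$ and $\beta_1$ decorations on the preimages chosen at each application of surjectivity. Combining the resulting common $A$-factor with the $3$-Bihom-Jacobi identity in $L$ completes the verification.
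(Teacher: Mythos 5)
Your proposal is correct and takes essentially the same route as the paper: axioms (1)--(3) are checked componentwise with \eqref{asso} giving the symmetry of the $A$-factor, and the $3$-Bihom-Jacobi identity is reduced to showing that the four $A$-factors coincide, after which the common factor is pulled out and the Jacobi identity of $L$ is applied. Your two-move strategy for that coincidence (use surjectivity of $\alpha_1$ to write the inner product as $\alpha_1(h)$ and permute the outer triple by \eqref{asso}, then one application of the $3$-totally Bihom-associative law with preimages $a_i=\alpha_1(b_i)$) does close the computation, which the paper itself only asserts with the phrase ``using \eqref{asso}''.
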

\begin{proof}
Since $\alpha_1\beta_1=\beta_1\alpha_1$ and $\alpha_2\beta_2=\beta_2\alpha_2$, we have $\alpha\beta=\beta\alpha$. Meanwhile because $\alpha_1,\beta_1,\alpha_2,\beta_2$ are algebraic homomorphisms, that shows $\alpha$ and $\beta$ are algebraic homomorphisms.

Next we prove $[\cdot,\cdot, \cdot]_{A\otimes L}$ satisfies Bihom-skewsymmetry, \begin{eqnarray*}
&&[\beta(a_1\otimes x_1),\beta(a_2\otimes x_2), \alpha(a_3\otimes x_3)]_{A\otimes L}\\
&=&[\beta_1(a_1)\otimes\beta_2(x_1), \beta_1(a_2)\otimes\beta_2(x_2), \alpha_1(a_3)\otimes\alpha_2(x_3)]_{A\otimes L}\\
&=&\beta_1(a_1)\beta_1(a_2)\alpha_1(a_3)\otimes[\beta_2(x_1), \beta_2(x_2), \alpha_2(x_3)]\\
&=&-\beta_1(a_2)\beta_1(a_1)\alpha_1(a_3)\otimes[\beta_2(x_2), \beta_2(x_1), \alpha_2(x_3)]\\
&=&-[\beta(a_2\otimes x_2),\beta(a_1\otimes x_1), \alpha(a_3\otimes x_3)]_{A\otimes L}.
\end{eqnarray*}
Similarly, we can get $$[\beta(a_1\otimes x_1),\beta(a_2\otimes x_2), \alpha(a_3\otimes x_3)]_{A\otimes L}=-[\beta(a_1\otimes x_1),\beta(a_3\otimes x_3), \alpha(a_2\otimes x_2)]_{A\otimes L}.$$

Finally we prove the $3$-Bihom-Jacobi identity,
\begin{eqnarray*}
&&[\beta^2(a_4\otimes x_4), \beta^2(a_5\otimes x_5), [\beta(a_1\otimes x_1), \beta(a_2\otimes x_2), \alpha(a_3\otimes x_3)]_{A\otimes L}]_{A\otimes L}\\
&&-[\beta^2(a_3\otimes x_3), \beta^2(a_5\otimes x_5), [\beta(a_1\otimes x_1), \beta(a_2\otimes x_2), \alpha(a_4\otimes x_4)]_{A\otimes L}]_{A\otimes L}\\
&&+[\beta^2(a_3\otimes x_3), \beta^2(a_4\otimes x_4), [\beta(a_1\otimes x_1), \beta(a_2\otimes x_2), \alpha(a_5\otimes x_5)]_{A\otimes L}]_{A\otimes L}\\
&=&\beta_{1}^{2}(a_4)\beta_{1}^{2}(a_5)\big(\beta_{1}(a_1)\beta_{1}(a_2)\alpha_{1}(a_3)\big)\otimes[\beta_{2}^{2}(x_4), \beta_{2}^{2}(x_5), [\beta_{2}(x_1), \beta_{2}(x_2), \alpha_{2}(x_3)]]\\
&&-\beta_{1}^{2}(a_3)\beta_{1}^{2}(a_5)\big(\beta_{1}(a_1)\beta_{1}(a_2)\alpha_{1}(a_4)\big)\otimes[\beta_{2}^{2}(x_3), \beta_{2}^{2}(x_5), [\beta_{2}(x_1), \beta_{2}(x_2), \alpha_{2}(x_4)]]\\
&&+\beta_{1}^{2}(a_3)\beta_{1}^{2}(a_4)\big(\beta_{1}(a_1)\beta_{1}(a_2)\alpha_{1}(a_5)\big)\otimes[\beta_{2}^{2}(x_3), \beta_{2}^{2}(x_4), [\beta_{2}(x_1), \beta_{2}(x_2), \alpha_{2}(x_5)]]\\
&=&\beta_{1}^{2}(a_1)\beta_{1}^{2}(a_2)\big(\beta_{1}(a_3)\beta_{1}(a_4)\alpha_{1}(a_5)\big)\otimes\big([\beta_{2}^{2}(x_4), \beta_{2}^{2}(x_5), [\beta_{2}(x_1), \beta_{2}(x_2), \alpha_{2}(x_3)]]\\
&&-[\beta_{2}^{2}(x_3), \beta_{2}^{2}(x_5), [\beta_{2}(x_1), \beta_{2}(x_2), \alpha_{2}(x_4)]]\!+\![\beta_{2}^{2}(x_3), \beta_{2}^{2}(x_4), [\beta_{2}(x_1), \beta_{2}(x_2), \alpha_{2}(x_5)]]\big)\\
&=&\beta_{1}^{2}(a_1)\beta_{1}^{2}(a_2)\big(\beta_{1}(a_3)\beta_{1}(a_4)\alpha_{1}(a_5)\big)\otimes[\beta_{2}^{2}(x_1), \beta_{2}^{2}(x_2), [\beta_{2}(x_3), \beta_{2}(x_4), \alpha_{2}(x_5)]]\\
&=&[\beta^2(a_1\otimes x_1), \beta^2(a_2\otimes x_2), [\beta(a_3\otimes x_3), \beta(a_4\otimes x_4), \alpha(a_5\otimes x_5)]_{A\otimes L}]_{A\otimes L},
\end{eqnarray*}
where using \eqref{asso} in the second equality.

Thus $(A\otimes L, [\cdot ,\cdot, \cdot]_{A\otimes L}, \alpha, \beta)$ is a $3$-BiHom-Lie algebra.
\end{proof}

In \cite{AAS}, $3$-BiHom-Lie algebras can be induced by BiHom-Lie algebras. Now we can get a Bihom-Lie algebra by a $3$-Bihom-Lie algebra.
\begin{prop}
Let $(L,[\cdot,\cdot,\cdot], \alpha, \beta)$ be a $3$-BiHom-Lie algebra. Suppose $a\in L$ satisfies $\alpha(a)=\beta(a)=a$. Then $(L,[\cdot,\cdot], \alpha, \beta)$ is a Bihom-Lie algebra, where $[x,y]=[a,x,y], ~\forall\, x,y\in L$.
\end{prop}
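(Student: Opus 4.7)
The plan is to check each axiom of a Bihom-Lie algebra for $(L,[\cdot,\cdot],\alpha,\beta)$, using the $\alpha,\beta$-invariance of $a$ to collapse every occurrence of $[a,-,-]$ back to the binary bracket $[-,-]$, and thereby reduce each $3$-Bihom axiom to its $2$-Bihom counterpart.

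The commutation $\alpha\circ\beta=\beta\circ\alpha$ is inherited directly from the given $3$-Bihom-Lie structure. For the morphism property of $\alpha$ (and symmetrically $\beta$) I would compute
\[
\alpha([x,y])=\alpha([a,x,y])=[\alpha(a),\alpha(x),\alpha(y)]=[a,\alpha(x),\alpha(y)]=[\alpha(x),\alpha(y)],
\]
invoking $\alpha(a)=a$. For Bihom-skewsymmetry $[\beta(x),\alpha(y)]=-[\beta(y),\alpha(x)]$ I rewrite
\[
[\beta(x),\alpha(y)]=[a,\beta(x),\alpha(y)]=[\beta(a),\beta(x),\alpha(y)],
\]
apply the $3$-Bihom-skewsymmetry swapping the second and third slots to obtain $-[\beta(a),\beta(y),\alpha(x)]$, and unwind back to $-[\beta(y),\alpha(x)]$.

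The main step is the Bihom-Jacobi identity. I would obtain it by specializing the $3$-Bihom-Jacobi identity of Definition 2.1 with $u=a$ (in the outer bracket) and $x=a$ (in the innermost bracket on the left-hand side). Since $\alpha(a)=\beta(a)=\beta^{2}(a)=a$, the innermost bracket $[a,\beta(y),\alpha(z)]$ becomes $[\beta(y),\alpha(z)]$, the outer bracket $[a,\beta^{2}(v),-]$ becomes $[\beta^{2}(v),-]$, so the left-hand side reduces to $[\beta^{2}(v),[\beta(y),\alpha(z)]]$. On the right, the first term contains the factor $[\beta(a),\beta(v),\alpha(a)]$, which vanishes by two uses of $3$-Bihom-skewsymmetry (first swap the second and third arguments to pair two copies of $\beta(a)$, then swap the first and second arguments to identify the expression with its negative). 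The other two right-hand terms collapse similarly to $2$-brackets, giving
\[
[\beta^{2}(v),[\beta(y),\alpha(z)]]=-[\beta^{2}(z),[\beta(v),\alpha(y)]]+[\beta^{2}(y),[\beta(v),\alpha(z)]].
\]
Rearranging, and applying the already-established $2$-Bihom-skewsymmetry to the inner bracket of the last term, produces the cyclic Bihom-Jacobi identity
\[
[\beta^{2}(v),[\beta(y),\alpha(z)]]+[\beta^{2}(y),[\beta(z),\alpha(v)]]+[\beta^{2}(z),[\beta(v),\alpha(y)]]=0.
\]

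The only genuinely delicate step is selecting the specialization of the $3$-Jacobi so that both the outer and the innermost first slots are absorbed by $a$; once that choice is made, everything else is bookkeeping with the invariances $\alpha(a)=\beta(a)=a$ and the $3$-Bihom-skewsymmetry.
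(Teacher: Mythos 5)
Your proposal is correct and follows essentially the same route as the paper: both prove Bihom-skewsymmetry by inserting $a=\beta(a)$ and using the $3$-Bihom-skewsymmetry, and both obtain the Bihom-Jacobi identity from the single specialization of the $3$-Bihom-Jacobi identity with $a$ in the outer and inner first slots, using that $[\beta(a),\beta(v),\alpha(a)]=0$ and the invariance $\alpha(a)=\beta(a)=a$ (the paper runs the same cancellation starting from the cyclic sum rather than rearranging afterwards). Your additional check of multiplicativity of $\alpha$ and $\beta$ is a harmless extra the paper leaves implicit.
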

\begin{proof}
First we prove $[\cdot,\cdot]$ satisfies Bihom-skewsymmetry, $$[\beta(x), \alpha(y)]=[a, \beta(x), \alpha(y)]=[\beta(a), \beta(x), \alpha(y)]=-[\beta(a), \beta(y), \alpha(x)]=-[\beta(y), \alpha(x)].$$
Next we prove the Bihom-Jacobi identity,
\begin{eqnarray*}
&&[\beta^2(x), [\beta(y), \alpha(z)]]+[\beta^2(y), [\beta(z), \alpha(x)]]+[\beta^2(z), [\beta(x), \alpha(y)]]\\
&=&[a, \beta^2(x), [a, \beta(y), \alpha(z)]]+[a, \beta^2(y), [a, \beta(z), \alpha(x)]]+[a, \beta^2(z), [a, \beta(x), \alpha(y)]]\\
&=&[\beta^2(a), \beta^2(x), [\beta(a), \beta(y), \alpha(z)]]+[\beta^2(a), \beta^2(y), [\beta(a), \beta(z), \alpha(x)]]\\
& &+[\beta^2(a), \beta^2(z), [\beta(a), \beta(x), \alpha(y)]]\\
&=&[\beta^2(y), \beta^2(z), [\beta(a), \beta(x), \alpha(a)]]-[\beta^2(a), \beta^2(z), [\beta(a), \beta(x), \alpha(y)]]\\
& &+[\beta^2(a), \beta^2(y), [\beta(a), \beta(x), \alpha(z)]]+[\beta^2(a), \beta^2(y), [\beta(a), \beta(z), \alpha(x)]]\\
& &+[\beta^2(a), \beta^2(z), [\beta(a), \beta(x), \alpha(y)]]\\
&=&0.
\end{eqnarray*}
Thus, $(L,[\cdot,\cdot], \alpha, \beta)$ is a Bihom-Lie algebra.
\end{proof}

\begin{prop}
Given two $3$-Bihom-Lie algebras $( L, [\cdot,\cdot,\cdot], \alpha,
\beta )$ and $(L^{\prime}, [\cdot,\cdot,\cdot]^{\prime},\alpha^{\prime},
\beta^{\prime})$, there is a $3$-Bihom-Lie
algebra $(L\oplus L^{\prime}, [\cdot, \cdot, \cdot]_{L\oplus L^{\prime}},
\alpha+\alpha^{\prime}, \beta+\beta^{\prime} ),$
where the $3$-linear map
 $[\cdot, \cdot, \cdot]_{L\oplus L^{\prime}} :
 \wedge^{3}(L\oplus L^{\prime})\rightarrow L\oplus L^{\prime}$  is given by
$$[u_{1}+v_{1}, u_{2}+v_{2}, u_{3}+v_{3}]_{L\oplus L^{\prime}}
= [u_{1}, u_{2}, u_{3}]+[v_{1}, v_{2}, v_{3}]^{\prime},
~\forall\, u_{i}\in L, v_{i}\in L^{\prime},i=1,2,3,$$
and the two linear maps $\alpha+\alpha^{\prime},
\beta+\beta^{\prime}:L\oplus L^{\prime}
\rightarrow L\oplus L^{\prime}$ are given by
$$(\alpha+\alpha^{\prime})(u+v)=\alpha(u)+\alpha^{\prime}(v),$$
$$(\beta+\beta^{\prime})(u+v)=\beta(u)+\beta^{\prime}(v), \forall\ u\in L, v\in L^{\prime}.$$
\end{prop}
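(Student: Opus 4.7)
My plan is to verify the four defining conditions of a $3$-Bihom-Lie algebra directly for the quadruple $(L\oplus L',[\cdot,\cdot,\cdot]_{L\oplus L'},\alpha+\alpha',\beta+\beta')$, exploiting the fact that the bracket and the two structure maps are defined componentwise along the direct sum. As a result, every axiom we need to check will decouple into the corresponding axiom on $L$ and on $L'$, which are known to hold by hypothesis.

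First I would verify commutativity of the structure maps: evaluating $(\alpha+\alpha')(\beta+\beta')$ and $(\beta+\beta')(\alpha+\alpha')$ on a generic element $u+v$ with $u\in L$, $v\in L'$, both expressions reduce to $\alpha\beta(u)+\alpha'\beta'(v)$ using $\alpha\beta=\beta\alpha$ in $L$ and $\alpha'\beta'=\beta'\alpha'$ in $L'$. Next I would check that $\alpha+\alpha'$ and $\beta+\beta'$ are algebra homomorphisms with respect to $[\cdot,\cdot,\cdot]_{L\oplus L'}$; applying $\alpha+\alpha'$ to $[u_1+v_1,u_2+v_2,u_3+v_3]_{L\oplus L'}=[u_1,u_2,u_3]+[v_1,v_2,v_3]'$ splits into $\alpha[u_1,u_2,u_3]+\alpha'[v_1,v_2,v_3]'$, which equals the componentwise bracket of the images by the multiplicativity of $\alpha$ and $\alpha'$, and the same works for $\beta+\beta'$.

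For Bihom-skewsymmetry, I would expand
\[
[(\beta+\beta')(u_1+v_1),(\beta+\beta')(u_2+v_2),(\alpha+\alpha')(u_3+v_3)]_{L\oplus L'}
=[\beta(u_1),\beta(u_2),\alpha(u_3)]+[\beta'(v_1),\beta'(v_2),\alpha'(v_3)]',
\]
and observe that swapping the first two arguments (or the last two) on the left introduces a minus sign in each summand, by Bihom-skewsymmetry of $[\cdot,\cdot,\cdot]$ and of $[\cdot,\cdot,\cdot]'$ respectively. Finally, for the $3$-BiHom-Jacobi identity, I would expand each of the four iterated brackets on both sides in the form $A\text{-part}+A'\text{-part}$; matching the $L$-parts produces exactly the $3$-BiHom-Jacobi identity for $(L,[\cdot,\cdot,\cdot],\alpha,\beta)$ on the arguments $u_i$, and matching the $L'$-parts produces the identity for $(L',[\cdot,\cdot,\cdot]',\alpha',\beta')$ on the $v_i$, so both sides agree.

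Since no step ever mixes elements of $L$ with elements of $L'$ under the bracket, there is no genuine obstacle: the entire argument is a mechanical componentwise reduction, and the only care needed is bookkeeping to ensure the $(\alpha+\alpha')$ and $(\beta+\beta')$ are correctly distributed in the nested brackets of the Jacobi identity.
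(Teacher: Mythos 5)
Your proposal is correct and follows essentially the same route as the paper: both verify $(\alpha+\alpha')\circ(\beta+\beta')=(\beta+\beta')\circ(\alpha+\alpha')$, the multiplicativity of the structure maps, Bihom-skewsymmetry, and the $3$-BiHom-Jacobi identity by the same componentwise decoupling into the $L$- and $L'$-parts. No gaps; this is the standard direct-sum argument used in the paper's own proof.
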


\begin{proof}
Since $\alpha,\beta,\alpha^{\prime},\beta^{\prime}$ are algebraic homomorphisms, that shows $\alpha+\alpha^{\prime}$ and
$\beta+\beta^{\prime}$ are algebraic homomorphisms.
For any $u_i\in L,~v_i\in L^{\prime}$, $i=1,2,3,4,5$, we have
\begin{eqnarray*}
(\alpha+\alpha^{\prime})\circ(\beta+\beta^{\prime})(u_1+v_1)&=&(\alpha+\alpha^{\prime})(\beta(u_1)+\beta^{\prime}(v_1))\\
&=&\alpha\circ \beta(u_1)+\alpha^{\prime}\circ\beta^{\prime}(v_1)\\
&=&\beta\circ\alpha(u_1)+\beta^{\prime}\circ\alpha^{\prime}(v_1)\\
&=&(\beta+\beta^{\prime})\circ(\alpha+\alpha^{\prime})(u_1+v_1),
\end{eqnarray*}
i.e. $(\alpha+\alpha^{\prime})\circ(\beta+\beta^{\prime})=(\beta+\beta^{\prime})\circ(\alpha+\alpha^{\prime})$.

Next, we consider the Bihom-skewsymmetry,
\begin{eqnarray*}
&&{[(\beta+\beta^{\prime})(u_1+v_1),(\beta+\beta^{\prime})(u_2+v_2), (\alpha+\alpha^{\prime})(u_3+v_3)]}_{L\oplus L'}\\
&=&{[\beta(u_1)+\beta^{\prime}(v_1), \beta(u_2)+\beta^{\prime}(v_2), \alpha(u_3)+\alpha^{\prime}(v_3)]}_{L\oplus L'}\\
&=&[\beta(u_{1}), \beta(u_{2}), \alpha(u_{3})]+ [\beta^{\prime}(v_{1}),\beta^{\prime}(v_{2}), \alpha^{\prime}(v_{3})]^{\prime}\\
&=&-[\beta(u_{2}),\beta(u_{1}), \alpha(u_{3})]-[\beta^{\prime}(v_{2}),\beta^{\prime}(v_{1}), \alpha^{\prime}(v_{1})]^{\prime}\\
&=&-{[(\beta+\beta^{\prime})(u_2+v_2), (\beta+\beta^{\prime})(u_1+v_1), (\alpha+\alpha^{\prime})(u_3+v_3)]}_{L\oplus L'}.
\end{eqnarray*}
Similarly, we can get
\begin{eqnarray*}&&{[(\beta+\beta^{\prime})(u_1+v_1),(\beta+\beta^{\prime})(u_2+v_2), (\alpha+\alpha^{\prime})(u_3+v_3)]}_{L\oplus L'}\\
&=&-{[(\beta+\beta^{\prime})(u_1+v_1),(\beta+\beta^{\prime})(u_3+v_3), (\alpha+\alpha^{\prime})(u_2+v_2)]}_{L\oplus L'}.
\end{eqnarray*}
Finally, we prove the $3$-Bihom-Jacobi identity,
\begin{eqnarray*}
&&[(\beta+\beta^{\prime})^2(u_1+v_1), (\beta+\beta^{\prime})^2(u_2+v_2),[(\beta+\beta^{\prime})(u_3+v_3), (\beta+\beta^{\prime})(u_4+v_4),\\
 &&~(\alpha+\alpha^{\prime})(u_5+v_5)]_{L\oplus L'}]_{L\oplus L'}\\
&=&[\beta^2(u_1)\!+\!{\beta^{\prime}}^2(v_1), \beta^2(u_2)\!+\!{\beta^{\prime}}^2(v_2),[\beta(u_3),\beta(u_4), \alpha(u_5)]\!+\![\beta^{\prime}(v_3), \beta^{\prime}(v_4),\alpha^{\prime}(v_5)]^{\prime}]_{L\oplus L'}\\
&=&[\beta^2(u_1), \beta^2(u_2),[\beta(u_3), \beta(u_4), \alpha(u_4)]]+[{\beta^{\prime}}^2(v_1),{\beta^{\prime}}^2(v_2), [\beta^{\prime}(v_3), \beta^{\prime}(v_4),\alpha^{\prime}(v_5)]^{\prime}]^{\prime}.
\end{eqnarray*}
By the $3$-Bihom-Jacobi identity of $[\cdot,\cdot,\cdot]$ and $[\cdot,\cdot,\cdot]^{\prime}$, $[\cdot,\cdot,\cdot]_{L\oplus L'}$ satisfies the $3$-Bihom-Jacobi identity.

Thus, $(L\oplus L^{\prime}, [\cdot, \cdot, \cdot]_{L\oplus L^{\prime}},
\alpha+\alpha^{\prime}, \beta+\beta^{\prime} )$ is a $3$-Bihom-Lie
algebra.
\end{proof}

\begin{defn}\cite{AAS}
Let $( L, [\cdot,\cdot,\cdot] ,\alpha,
\beta )$ and $(L^{\prime}, [\cdot,\cdot,\cdot]^{\prime} ,\alpha^{\prime},
\beta^{\prime})$ be two $3$-Bihom-Lie algebras. A homomorphism $f:L \rightarrow L'$ is said to be a morphism of $3$-Bihom-Lie algebras if
\begin{equation*} f([x, y, z])=[f(x), f(y), f(z)]^{\prime}, ~\forall\, x,y,z\in L, \end{equation*}
\begin{equation*}f\circ\alpha=\alpha^{\prime}\circ f, \qquad\qquad\qquad\qquad\end{equation*}
\begin{equation*}f\circ\beta=\beta^{\prime}\circ f. \qquad\qquad\qquad\qquad\end{equation*}

Denote by $\phi_{f}=\{x+f(x)~|~x\in L\}\subset L\oplus L'$ which is the graph of a linear map $f:L \rightarrow L'$.
\end{defn}

\begin{prop}
 A homomorphism $f: ( L, [\cdot,\cdot,\cdot] ,\alpha ,\beta )\rightarrow(L^{\prime},
 [\cdot,\cdot,\cdot]^{\prime} ,\alpha^{\prime} ,\beta^{\prime})$ is a morphism of
  $3$-Bihom-Lie algebras if and only if the graph
  $\phi_{f}\subset L\oplus L^{\prime}$ is a Bihom-subalgebra
  of $(L\oplus L^{\prime}, [\cdot,\cdot, \cdot]_{L\oplus L^{\prime}},
  \alpha+\alpha^{\prime}, \beta+\beta^{\prime} ).$
\end{prop}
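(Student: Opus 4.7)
The plan is to prove each direction by a direct unpacking of the definitions of the direct-sum structure on $L\oplus L'$ and of the graph $\phi_f=\{x+f(x)\mid x\in L\}$. The single recurring observation is that an element $u+v\in L\oplus L'$ (with $u\in L$, $v\in L'$) lies in $\phi_f$ if and only if $v=f(u)$; every closure condition for $\phi_f$ will translate into an equation of the form ``$f$ of something equals $f$ applied coordinate-wise to that thing.''

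For the forward direction, I would assume $f$ is a morphism and verify the three closure properties for $\phi_f$. For any $x\in L$, compute
\[
(\alpha+\alpha')(x+f(x))=\alpha(x)+\alpha'(f(x))=\alpha(x)+f(\alpha(x)),
\]
using $\alpha'\circ f=f\circ\alpha$, which exhibits the result as an element of $\phi_f$; the same calculation with $\beta$ in place of $\alpha$ handles $\beta+\beta'$. For the bracket, take $x_i+f(x_i)\in\phi_f$ and compute
\[
[x_1+f(x_1),x_2+f(x_2),x_3+f(x_3)]_{L\oplus L'}=[x_1,x_2,x_3]+[f(x_1),f(x_2),f(x_3)]'.
\]
Since $f$ preserves the bracket, the second summand equals $f([x_1,x_2,x_3])$, so the result lies in $\phi_f$. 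Thus $\phi_f$ is a Bihom-subalgebra.

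For the converse, assume $\phi_f$ is a Bihom-subalgebra. Closure under $\alpha+\alpha'$ gives $\alpha(x)+\alpha'(f(x))\in\phi_f$ for every $x\in L$, which by the characterization of $\phi_f$ forces $\alpha'(f(x))=f(\alpha(x))$; hence $\alpha'\circ f=f\circ\alpha$, and similarly $\beta'\circ f=f\circ\beta$. Closure under the bracket gives
\[
[x_1,x_2,x_3]+[f(x_1),f(x_2),f(x_3)]'\in\phi_f,
\]
which forces $[f(x_1),f(x_2),f(x_3)]'=f([x_1,x_2,x_3])$. These three equalities are exactly the axioms for $f$ to be a morphism of $3$-Bihom-Lie algebras.

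There is no real obstacle here beyond being careful about the direct-sum definitions given in the preceding proposition; both directions reduce to reading off each component of a sum in $L\oplus L'$ and invoking the defining property of the graph. I will present the two directions compactly, sharing the computations of the coordinate-wise action of $\alpha+\alpha'$, $\beta+\beta'$, and $[\cdot,\cdot,\cdot]_{L\oplus L'}$.
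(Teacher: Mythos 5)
Your proposal is correct and follows essentially the same route as the paper's own proof: both directions are handled by unpacking the direct-sum bracket and structure maps on graph elements $x+f(x)$ and using the characterization that $u+v\in\phi_f$ iff $v=f(u)$. No gaps; your write-up is, if anything, slightly cleaner than the paper's (which contains a couple of harmless typos in the same computation).
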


\begin{proof}
Let $f: ( L, [\cdot,\cdot,\cdot] ,\alpha ,\beta )\rightarrow(L^{\prime}, [\cdot,\cdot,\cdot]^{\prime},
\alpha^{\prime} ,\beta^{\prime})$ is a morphism of $3$-Bihom-Lie algebras,
 for any $u, v, w\in L,$  we have
$$[u+f(u), v+f(v)
,w+f(w)]_{L\oplus L^{\prime}}=[u,v,w]+[f(u),
f(v),f(w)]^{\prime}=[u,v,w]+f([u,v,w]).$$
Thus the graph $\phi_{f}$ is closed under the bracket
operation $[\cdot,\cdot,\cdot]_{L\oplus L^{\prime}}$.  Furthermore, we have
$(\alpha+\alpha^{\prime})(u+f(v))=\alpha(u)+\alpha^{\prime}\circ f(v)=\alpha(u)+ f\circ\alpha(v)$,
which implies that $(\alpha+\alpha^{\prime})(\phi_{f})\subset\phi_{f}$.
Similarly, $(\beta+\beta^{\prime})(\phi_{f})\subset\phi_{f}$.
Thus $\phi_{f}$ is a Bihom-subalgebra of $(L\oplus L^{\prime},
[\cdot, \cdot]_{L\oplus L^{\prime}}, \alpha+\alpha^{\prime}, \beta+\beta^{\prime} ).$

Conversely, if the graph $\phi_{f}$ is a Bihom-subalgebra of
  $(L\oplus L^{\prime}, [\cdot, \cdot]_{L\oplus L^{\prime}},
  \alpha+\alpha^{\prime}, \beta+\beta^{\prime} )$, we have $[u+f(u), v+
  f(v), w+f(w)]_{L\oplus L^{\prime}}=[u, v, w]+
  [f(u), f(v), f(w)]^{\prime}\in\phi_{f}$,
 which implies that $[f(u), f(v), f(v)]^{\prime}=f([u, v, w])$.
 Furthermore, $(\alpha+\alpha^{\prime})(\phi_{f})\subset\phi_{f}$ yields that
  $(\alpha+\alpha^{\prime})(u+f(u))=\alpha(u)+\alpha^{\prime}\circ f(u)\in\phi_{f}$,
  which is equivalent to the condition $\alpha^{\prime}\circ f(u)=f\circ\alpha(u)$, i.e. $\alpha^{\prime}\circ f=f\circ\alpha.$
  Similarly, $\beta^{\prime}\circ f=f\circ\beta.$
  Therefore, $f$ is a morphism of $3$-Bihom-Lie algebras.
\end{proof}

Let $( L,[\cdot,\cdot,\cdot] ,\alpha ,\beta )$ be a $3$-Bihom-Lie algebra. For any integers $k$ and $l$, denote by $\alpha^{k}$ the $k$-times composition of $\alpha$ and $\beta^{l}$ the $l$-times composition of $\beta$, i.e.
 $$\alpha^{k}=\alpha\circ\cdots\circ\alpha (k-times),~ \beta^{l}=\beta\circ\cdots\circ\beta(l-times).$$
 Since the maps $\alpha, \beta$ commute, we denote by $$\alpha^{k}\beta^{l}=\underbrace{\alpha\circ\cdots\circ\alpha}_{k-times}\circ\underbrace{\beta\circ\cdots\circ\beta}_{l-times}.$$
In particular, $\alpha^{0}\beta^{0}=Id, \alpha^{1}\beta^{1}=\alpha\beta$. If $( L,[\cdot,\cdot,\cdot] ,\alpha ,\beta ) $ be a regular $3$-Bihom-Lie algebra, $\alpha^{-k}\beta^{-l}$ is the inverse of $\alpha^{k}\beta^{l}.$

\begin{defn}\cite{AMIA}
For any integers $k$ and $l$,  a linear map $D: L\rightarrow L$ is called an $\alpha^{k}\beta^{l}$-derivation of the $3$-Bihom-Lie algebra $( L,[\cdot,\cdot,\cdot] ,\alpha ,\beta )$, if for all $u,v,w\in L$,
$$
D\circ\alpha=\alpha\circ D, \, D\circ\beta=\beta\circ D, $$
$$
D[u,v,w]\!=\![D(u), \alpha^{k}\beta^{l}(v), \alpha^{k}\beta^{l}(w)]\!+\![\alpha^{k}\beta^{l}(u), D(v), \alpha^{k}\beta^{l}(w)]\!+\![\alpha^{k}\beta^{l}(u), \alpha^{k}\beta^{l}(v), D(w)].
$$

\end{defn}
Denote by $Der_{\alpha^{k}\beta^{l}}(L)$ the set of
$\alpha^{k}\beta^{l}$-derivations of $( L,[\cdot,\cdot,\cdot] ,\alpha ,\beta ) $.

Now let $( L,[\cdot,\cdot,\cdot] ,\alpha ,\beta ) $ be a regular $3$-Bihom-Lie algebra, for any $u\in L$ satisfying
$\alpha(u_1)=\beta(u_1)=u_1$, $\alpha(u_2)=\beta(u_2)=u_1$, define $D_{k,l}(u_1, u_2)\in End(L)$ by
$$D_{k,l}(u_1,u_2)(w)=[u_1, u_2, \alpha^{k}\beta^{l}(w)], ~ \forall\, w\in L.$$

\begin{prop}
$D_{k,l}(u_1,u_2)$ is an $\alpha^{k}\beta^{l+1}$-derivation. We call an {inner} $\alpha^{k}\beta^{l+1}$-derivation.
\end{prop}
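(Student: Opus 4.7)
The plan is to verify the two conditions defining an $\alpha^{k}\beta^{l+1}$-derivation, writing $D:=D_{k,l}(u_1,u_2)$ throughout.

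For the commutations $D\circ\alpha=\alpha\circ D$ and $D\circ\beta=\beta\circ D$, I would use the hypotheses $\alpha(u_i)=\beta(u_i)=u_i$ ($i=1,2$), the fact that $\alpha$ is an algebra morphism for $[\cdot,\cdot,\cdot]$, and $\alpha\beta=\beta\alpha$: one directly computes
$\alpha(D(w))=\alpha[u_1,u_2,\alpha^k\beta^l(w)]=[\alpha(u_1),\alpha(u_2),\alpha^{k+1}\beta^l(w)]=[u_1,u_2,\alpha^k\beta^l(\alpha(w))]=D(\alpha(w))$,
and identically for $\beta$.

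For the Leibniz identity, I first rewrite
$D[u,v,w]=[u_1,u_2,[\alpha^k\beta^l(u),\alpha^k\beta^l(v),\alpha^k\beta^l(w)]]$
using that $\alpha^k\beta^l$ is a morphism. Since $\beta^2(u_i)=u_i$ and the algebra is regular (so $\alpha^{-1},\beta^{-1}$ are available), this bracket can be recast as $[\beta^2(u_1),\beta^2(u_2),[\beta(x),\beta(y),\alpha(z)]]$ with $x=\alpha^k\beta^{l-1}(u)$, $y=\alpha^k\beta^{l-1}(v)$, $z=\alpha^{k-1}\beta^l(w)$. Applying the $3$-Bihom-Jacobi identity with $(u,v)\mapsto(u_1,u_2)$, the third of the three resulting terms is immediately $[\alpha^k\beta^{l+1}(u),\alpha^k\beta^{l+1}(v),D(w)]$, matching the third Leibniz term.

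The first two Jacobi terms emerge with mismatched twist powers: inner brackets of the form $[u_1,u_2,\alpha^{k+1}\beta^{l-1}(\cdot)]$ alongside outer entries $\alpha^{k-1}\beta^{l+2}(w)$. To rebalance, I apply the Bihom-skewsymmetry $[\beta(x'),\beta(y'),\alpha(z')]=-[\beta(x'),\beta(z'),\alpha(y')]$, swapping the mismatched $\alpha$- and $\beta$-loaded slots: the inner bracket absorbs the extra $\beta$ to become $D(u)$ (resp.\ $D(v)$), while the outer exponents collapse to $\alpha^k\beta^{l+1}$. A second skewsymmetry swapping the first two slots then moves $D(u)$ (resp.\ $D(v)$) into the leading position; the two sign flips cancel the minus sign appearing on the middle Jacobi term, delivering exactly $[D(u),\alpha^k\beta^{l+1}(v),\alpha^k\beta^{l+1}(w)]$ and $[\alpha^k\beta^{l+1}(u),D(v),\alpha^k\beta^{l+1}(w)]$.

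The main obstacle is the careful bookkeeping of the $\alpha$- and $\beta$-exponents through the two successive Bihom-skewsymmetry steps. Regularity is indispensable here: without the automorphism assumption on $\alpha,\beta$ the substitutions $x=\alpha^k\beta^{l-1}(u)$ and $z=\alpha^{k-1}\beta^l(w)$ would not be legitimate when $k=0$ or $l=0$, so the Jacobi identity could not be brought to bear in the required form.
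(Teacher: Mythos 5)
Your proposal is correct and follows essentially the same route as the paper: write $D[u,v,w]=[\beta^{2}(u_1),\beta^{2}(u_2),[\beta(x),\beta(y),\alpha(z)]]$ with $x=\alpha^{k}\beta^{l-1}(u)$, $y=\alpha^{k}\beta^{l-1}(v)$, $z=\alpha^{k-1}\beta^{l}(w)$ (legitimate by regularity and $\alpha(u_i)=\beta(u_i)=u_i$), apply the $3$-Bihom-Jacobi identity, and then use Bihom-skewsymmetry to convert the inner brackets into $D(u)$, $D(v)$ with outer entries $\alpha^{k}\beta^{l+1}(\cdot)$, exactly as the paper does. The only quibble is descriptive: for the middle Jacobi term a single swap of the last two slots (placing $D(v)$ in the second slot, not the leading one) already cancels the Jacobi minus sign, whereas the first term needs the two mutually cancelling swaps you describe; since your stated final terms are the correct Leibniz terms, this is harmless.
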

\begin{proof}
First we have
$$D_{k,l}(u_1, u_2)(\alpha(w))=[u_1, u_2, \alpha^{k}\beta^{l}\alpha(w) ]=\alpha([u_1, u_2, \alpha^{k}\beta^{l}(w) ])=\alpha\circ D_{k,l}(u_1,u_2)(w). $$
Similarly, $D_{k,l}(u_1, u_2)(\beta(w))=\beta\circ D_{k,l}(u_1,u_2)(w). $
Then, we have
\begin{eqnarray*}
&&D_{k,l}(u_1,u_1)([u,v,w])\\
&=&[u_1, u_2, \alpha^{k}\beta^{l}[u,v,w]]\\
&=&[\beta^{2}(u_1), \beta^{2}(u_2), [\beta\alpha^{k}\beta^{l-1}(u),\beta\alpha^{k}\beta^{l-1}(v),
\alpha\alpha^{k-1}\beta^{l}(w)]]\\
&=&[\beta^{2}\alpha^{k}\beta^{l-1}(v),\beta^{2}\alpha^{k-1}\beta^{l}(w),[\beta(u_1), \beta(u_2), \alpha\alpha^{k}\beta^{l-1}(u)]]\\
& &-[\beta^{2}\alpha^{k}\beta^{l-1}(u),\beta^{2}\alpha^{k-1}\beta^{l}(w),[\beta(u_1), \beta(u_2), \alpha\alpha^{k}\beta^{l-1}(v)]]\\
& &+[\beta^{2}\alpha^{k}\beta^{l-1}(u),\beta^{2}\alpha^{k}\beta^{l-1}(v),[\beta(u_1), \beta(u_2), \alpha\alpha^{k-1}\beta^{l}(w)]]\\
&=&[\beta\alpha^{k}\beta^{l}(v),\beta\alpha^{k-1}\beta^{l+1}(w),\alpha([u_1, u_2, \alpha^{k}\beta^{l-1}(u)])]\\
& &-[\beta\alpha^{k}\beta^{l}(u),\beta\alpha^{k-1}\beta^{l+1}(w),\alpha([u_1, u_2, \alpha^{k}\beta^{l-1}(v)])]\\
& &+[\beta\alpha^{k}\beta^{l}(u),\beta\alpha^{k}\beta^{l}(v),\alpha([u_1, u_2, \alpha^{k-1}\beta^{l}(w)])]\\
&=&[\beta([u_1, u_2, \alpha^{k}\beta^{l-1}(u)]), \beta\alpha^{k}\beta^{l}(v),\alpha\alpha^{k-1}\beta^{l+1}(w)]\\
& &+[\beta\alpha^{k}\beta^{l}(u),\beta([u_1, u_2, \alpha^{k}\beta^{l-1}(v)]), \alpha\alpha^{k-1}\beta^{l+1}(w)]\\
& &+[\beta\alpha^{k}\beta^{l}(u),\beta\alpha^{k}\beta^{l}(v),\alpha([u_1, u_2, \alpha^{k-1}\beta^{l}(w)])]\\
&=&[D_{k,l}(u_1,u_2)(u), \alpha^{k}\beta^{l+1}(v), \alpha^{k}\beta^{l+1}(w)]+[\alpha^{k}\beta^{l+1}(u),D_{k,l}(u_1,u_2)(v), \alpha^{k}\beta^{l+1}(w)]\\
& &+[\alpha^{k}\beta^{l+1}(u),\alpha^{k}\beta^{l+1}(v),D_{k,l}(u_1,u_2)(w)].
\end{eqnarray*}
Therefore, $D_{k,l}(u_1,u_2)$ is an $\alpha^{k}\beta^{l+1}$-derivation.
\end{proof}
Denote  by $Inn_{\alpha^{k}\beta^{l}}(L)$ the set of
inner $\alpha^{k}\beta^{l}$-derivations, i.e.
$$Inn_{\alpha^{k}\beta^{l}}(L)=\{[u_1, u_2, \alpha^{k}
\beta^{l-1}(\cdot)]\mid u_1,u_2\in L, \alpha(u_1)=\beta(u_1)=u_1, \alpha(u_2)= \beta(u_2)=u_2\}.$$

In \cite{AMIA}, we can find $Der(L)=\bigoplus\limits_{k,l}
Der_{\alpha^{k}\beta^{l}}(L)$ with $[D, D^{'}]=D \circ D^{'}-D^{'} \circ D$ is a Lie algebra. Set $Inn(L)=\bigoplus\limits_{k,l}
Inn_{\alpha^{k}\beta^{l}}(L)$.

\begin{prop}
$Inn(L)$ is an ideal of $Der(L)$.
\end{prop}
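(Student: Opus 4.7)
The plan is to show that the Lie bracket $[D,\Delta]=D\circ\Delta-\Delta\circ D$ of an arbitrary derivation $D\in Der(L)$ with an arbitrary inner derivation $\Delta\in Inn(L)$ lies in $Inn(L)$. By the direct sum decomposition $Der(L)=\bigoplus_{p,q}Der_{\alpha^{p}\beta^{q}}(L)$ and $Inn(L)=\bigoplus_{k,l}Inn_{\alpha^{k}\beta^{l}}(L)$, together with bilinearity of the bracket, it suffices to reduce to the case $D\in Der_{\alpha^{p}\beta^{q}}(L)$ and $\Delta=D_{k,l}(u_{1},u_{2})$ for some $u_{1},u_{2}\in L$ with $\alpha(u_{i})=\beta(u_{i})=u_{i}$.

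The first preparatory step is to check that $D(u_{1})$ and $D(u_{2})$ inherit the fixed-point condition. Because $D\circ\alpha=\alpha\circ D$ and $\alpha(u_{i})=u_{i}$, we get $\alpha(D(u_{i}))=D(\alpha(u_{i}))=D(u_{i})$, and similarly $\beta(D(u_{i}))=D(u_{i})$. Hence $D(u_{1})$ and $D(u_{2})$ also qualify to appear as first two slots of an inner derivation. Iterating $D\circ\alpha=\alpha\circ D$ and $D\circ\beta=\beta\circ D$ also gives $D\circ\alpha^{k}\beta^{l}=\alpha^{k}\beta^{l}\circ D$, which we will use to recognize one term of the expansion as $\Delta\circ D$.

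The main computation is then direct. Applying the $\alpha^{p}\beta^{q}$-derivation rule of $D$ to $\Delta(w)=[u_{1},u_{2},\alpha^{k}\beta^{l}(w)]$ yields three summands. Using $\alpha^{p}\beta^{q}(u_{i})=u_{i}$ in the first two summands and $D\circ\alpha^{k}\beta^{l}=\alpha^{k}\beta^{l}\circ D$ in the third, one obtains
\begin{eqnarray*}
D\circ\Delta(w) &=& [D(u_{1}),u_{2},\alpha^{p+k}\beta^{q+l}(w)]+[u_{1},D(u_{2}),\alpha^{p+k}\beta^{q+l}(w)]\\
&& +[u_{1},u_{2},\alpha^{k}\beta^{l}(D(w))].
\end{eqnarray*}
The last summand is precisely $\Delta\circ D(w)$, so
\begin{eqnarray*}
[D,\Delta](w) &=& D_{p+k,q+l}(D(u_{1}),u_{2})(w)+D_{p+k,q+l}(u_{1},D(u_{2}))(w),
\end{eqnarray*}
which is a sum of two inner $\alpha^{p+k}\beta^{q+l+1}$-derivations, hence an element of $Inn(L)$.

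The statement then follows by linearity. The only real obstacle is bookkeeping: one must be careful with the index shift between $D_{k,l}$ (which uses $\alpha^{k}\beta^{l}$ inside the bracket but is itself an $\alpha^{k}\beta^{l+1}$-derivation) and the labeling of $Inn_{\alpha^{k}\beta^{l}}(L)$, and one must not forget to verify that $D(u_{1}),D(u_{2})$ are still $\alpha,\beta$-invariant, which is exactly what allows the two residual terms to be written as genuine inner derivations.
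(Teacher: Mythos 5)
Your proof is correct and follows essentially the same route as the paper: reduce by linearity to $D\in Der_{\alpha^{p}\beta^{q}}(L)$ and an inner derivation $D_{k,l}(u_1,u_2)$, expand $D\circ D_{k,l}(u_1,u_2)$ with the derivation rule, cancel the third term against $D_{k,l}(u_1,u_2)\circ D$, and recognize the two remaining terms as $D_{p+k,q+l}(D(u_1),u_2)+D_{p+k,q+l}(u_1,D(u_2))$. Your explicit check that $D(u_1),D(u_2)$ remain fixed by $\alpha$ and $\beta$ is a welcome detail the paper leaves implicit, but the argument is the same.
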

\begin{proof}
Let $D_{s,t}(u_1,u_2)\in Inn_{\alpha^{s}\beta^{t+1}}(L)$ and $D\in Der_{\alpha^{k}\beta^{l}}(L)$. Then $[D, D_{s,t}(u_1,u_2)]\in Der_{\alpha^{k+s}\beta^{l+t+1}}(L)$, and for any $y\in L$
\begin{eqnarray*}
&&[D, D_{s,t}(u_1,u_2)](y)\\
&=&D[u_1,u_2,\alpha^s\beta^t(y)]-[u_1,u_2,\alpha^s\beta^tD(y)]\\
&= &[D(u_1),\alpha^k\beta^l(u_2),\alpha^{k+s}\beta^{l+t}(y)]+[\alpha^k\beta^l(u_1),D(u_2),\alpha^{k+s}\beta^{l+t}(y)]\\
& &+[\alpha^k\beta^l(u_1),\alpha^k\beta^l(u_2),\alpha^{s}\beta^{t}D(y)]-[u_1,u_2,\alpha^s\beta^tD(y)]\\
&=&[D(u_1),u_2,\alpha^{k+s}\beta^{l+t}(y)]+[u_1,D(u_2),\alpha^{k+s}\beta^{l+t}(y)]\\
&=&D_{k+s,l+t}(D(u_1),u_2)(y)+D_{k+s,l+t}(u_1,D(u_2))(y).
\end{eqnarray*}
Therefore $[D, D_{s,t}(u_1,u_2)]\in Inn_{\alpha^{k+s}\beta^{l+t+1}}(L)$, i.e. $Inn(L)$ is an ideal of $Der(L)$.
\end{proof}

\section{Representations and $T_\theta$-extensions of $3$-Bihom-Lie algebras}

\begin{defn}
Let $(L, [\cdot,\cdot,\cdot], \alpha, \beta)$ be a $3$-Bihom-Lie algebra.
A representation of $L$ is a $4$-tuple
$(M,\rho, \alpha_{M}, \beta_{M})$, where $M$ is a vector space,
$\alpha_{M},\beta_{M}\in End(M)$ are two commuting linear maps
and $\rho: L\times L\rightarrow End(M)$ is a skewsymmetry bilinear map, such that for all $u,v, x, y\in L$,
\begin{enumerate}[(1)]
\item $\rho(\alpha(u),\alpha(v))\circ \alpha_M=\alpha_M\circ\rho(u,v),$
\item $\rho(\beta(u),\beta(u))\circ \beta_M=\beta_M\circ\rho(u,v),$
\item $\quad\rho(\alpha\beta(u),\alpha\beta(v))\circ\rho(x,y)\\
\!=\!\rho(\beta(x),\beta(y))\!\circ\!\rho(\alpha(u),\alpha(v))\!+\!\rho([\beta(u),\beta(v),x],\beta(y))\!\circ\!\beta_M\!+\!\rho(\beta(x),[\beta(u),\beta(v),y])\!\circ\!\beta_M,$
\item $\quad\rho([\beta(u), \beta(u), x], \beta(y))\circ\beta_M\\
=\rho(\alpha\beta(v),\beta(x))\circ\rho(\alpha(u),y)+\rho(\beta(x),\alpha\beta(u))\circ\rho(\alpha(v),y)+\rho(\alpha\beta(u),\alpha\beta(v))\circ\rho(x,y).$\label{repre3}
\end{enumerate}
\end{defn}

\begin{prop}
Let $(L, [\cdot,\cdot,\cdot], \alpha, \beta)$ be a regular $3$-Bihom-Lie algebra. $\ad: L\times L\rightarrow End(L)$ is a linear map such that $$\ad(u_1,u_2)(x)=[u_1, u_2, x], ~\forall\, u_1, u_2, x\in L.$$ Then $(L,\ad, \alpha, \beta)$ is a representation of $(L, [\cdot,\cdot,\cdot], \alpha, \beta)$, called adjoint representation.
\end{prop}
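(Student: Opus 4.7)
The plan is to verify, in turn, the four axioms in the definition of a representation for the data $M=L$, $\alpha_M=\alpha$, $\beta_M=\beta$, and $\rho=\ad$. The bilinearity of $\ad$ is clear, and the required Bihom-skewsymmetry of $\ad$ is an immediate restatement of the Bihom-skewsymmetry of the triple bracket. Axioms (1) and (2) reduce, after evaluation on a test element $x\in L$, to $[\alpha(u),\alpha(v),\alpha(x)]=\alpha[u,v,x]$ and the analogous identity for $\beta$, both of which are just the multiplicativity of $\alpha$ and $\beta$ with respect to the bracket.

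The substance of the argument is axiom (3). Evaluating both sides of it on $z\in L$ turns it into
\begin{equation*}
[\alpha\beta(u),\alpha\beta(v),[x,y,z]] = [\beta(x),\beta(y),[\alpha(u),\alpha(v),z]] + [[\beta(u),\beta(v),x],\beta(y),\beta(z)] + [\beta(x),[\beta(u),\beta(v),y],\beta(z)].
\end{equation*}
I would derive this from the $3$-Bihom-Jacobi identity after the substitution $u\mapsto\alpha\beta^{-1}(u)$, $v\mapsto\alpha\beta^{-1}(v)$, $x\mapsto\beta^{-1}(x)$, $y\mapsto\beta^{-1}(y)$, $z\mapsto\alpha^{-1}(z)$, which is legitimate precisely because $L$ is regular. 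The Jacobi summand $[\beta^{2}(x),\beta^{2}(y),[\beta(u),\beta(v),\alpha(z)]]$ then yields the first term on the right-hand side directly. The other two Jacobi summands come out in the less convenient form $\pm[\beta(y),\beta^{2}\alpha^{-1}(z),\alpha[u,v,\beta^{-1}(x)]]$ and its $x\leftrightarrow y$ counterpart, and I would rewrite each of these using the identities $\alpha\beta=\beta\alpha$, $\alpha\beta\alpha^{-1}=\beta$, and $\beta[u,v,\beta^{-1}(x)]=[\beta(u),\beta(v),x]$, together with several applications of Bihom-skewsymmetry (each applied to a triple already presented in the form $[\beta(\cdot),\beta(\cdot),\alpha(\cdot)]$), until the bracket $[\beta(u),\beta(v),x]$ lands in the first slot and $\beta(y),\beta(z)$ fill the other two. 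This produces the second and third terms on the right-hand side.

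Axiom (4) is treated in exactly the same spirit, via a different Jacobi substitution chosen so that $[\beta(u),\beta(v),x]$ appears on the outer left of the resulting identity, followed by Bihom-skewsymmetry rearrangements to place $\alpha\beta(u)$, $\alpha\beta(v)$, $\beta(x)$ into the prescribed representation slots. The main obstacle throughout is pure book-keeping: tracking the powers of $\alpha$, $\beta$ and their inverses during substitutions, and verifying that every Bihom-skewsymmetry swap is applied to a triple of the correct form. The regularity hypothesis enters essentially since $\alpha^{-1}$ and $\beta^{-1}$ are used during the substitutions, which is why the adjoint representation is formulated only for regular $3$-Bihom-Lie algebras.
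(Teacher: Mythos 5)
Your proposal is correct: the paper itself disposes of this proposition with the single line that it ``follows by a direct computation from the definition of representations,'' and your verification is exactly that computation, with the axioms (1)--(2) reduced to multiplicativity of $\alpha,\beta$ and axioms (3)--(4) obtained from the $3$-Bihom-Jacobi identity via substitutions involving $\alpha^{-1},\beta^{-1}$ followed by Bihom-skewsymmetry swaps (I checked that your substitution for axiom (3) and the analogous one for axiom (4) do close up as you claim). The only point worth noting is that the plain skewsymmetry $\ad(u,v)=-\ad(v,u)$ and the repeated rewriting of arbitrary triples in the form $[\beta(\cdot),\beta(\cdot),\alpha(\cdot)]$ both use the regularity hypothesis, which you correctly identify as the reason the statement is restricted to regular $3$-Bihom-Lie algebras.
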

\begin{proof}
Follows a direct computation by the definition of representations.
\end{proof}

\begin{prop}\label{product}
Let $(L, [\cdot, \cdot, \cdot], \alpha,\beta)$ be a $3$-Bihom-Lie algebra and  $(M, \rho, \alpha_M, \beta_M)$ a
representation of $L$. Assume that the maps $\alpha$ and $\beta_M$ are surjective. Then $L\ltimes M:=(L\oplus M, [\cdot,\cdot,\cdot]_\rho,\alpha+ \alpha_M,\beta+\beta_M)$ is a $3$-Bihom-Lie algebra, where
$\alpha+\alpha_M,\beta+\beta_M:L\oplus M\rightarrow L\oplus M$ are defined by $(\alpha+\alpha_M)(u+x)=\alpha(u)+\alpha_M(x)$ and $(\beta+\beta_M)(u+x)=\beta(u)+\beta_M(x)$, and the bracket $[\cdot,\cdot,\cdot]_\rho$ is
defined by
\begin{eqnarray*}
&&[u+x, v+y, w+z]_{\rho}\\
&=&[u, v,z]+\rho(u,v)(z)-\rho(u, \alpha^{-1}\beta(w))(\alpha_M\beta_M^{-1}(y))+\rho(v, \alpha^{-1}\beta(w))(\alpha_M\beta_M^{-1}(x)),
\end{eqnarray*}
for all $u, v, w \in L$ and $x,y,z\in M$. We call $L\ltimes M$ the semidirect product of the $3$-Bihom-Lie algebra $(L, [\cdot,\cdot, \cdot], \alpha,\beta)$ and $M$.
\end{prop}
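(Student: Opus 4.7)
The proof is a direct verification of the four axioms of a $3$-Bihom-Lie algebra for $(L\oplus M,[\cdot,\cdot,\cdot]_\rho,\alpha+\alpha_M,\beta+\beta_M)$. The plan is to peel off the easy axioms first and leave the $3$-Bihom-Jacobi identity, which is where all the representation axioms will be used, for the end.

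First I would observe that $(\alpha+\alpha_M)\circ(\beta+\beta_M)=(\beta+\beta_M)\circ(\alpha+\alpha_M)$ follows component-wise from $\alpha\beta=\beta\alpha$ and $\alpha_M\beta_M=\beta_M\alpha_M$. Next, to check that $\alpha+\alpha_M$ and $\beta+\beta_M$ are multiplicative with respect to $[\cdot,\cdot,\cdot]_\rho$, I would push $\alpha+\alpha_M$ through the four summands in the definition of $[u+x,v+y,w+z]_\rho$: the $L$-part $[u,v,w]$ is handled by multiplicativity of $\alpha$ on $L$; the three $M$-summands are exactly of the form $\rho(-,-)(-)$, so I invoke axiom (1) of the representation, together with the fact that $\alpha$ commutes with $\alpha^{-1}\beta$ and $\alpha_M$ commutes with $\alpha_M\beta_M^{-1}$. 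The argument for $\beta+\beta_M$ is the same with axiom (2).

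For the Bihom-skewsymmetry I would verify the two relations $[\beta(u+x),\beta(v+y),\alpha(w+z)]_\rho=-[\beta(v+y),\beta(u+x),\alpha(w+z)]_\rho$ and $=-[\beta(u+x),\beta(w+z),\alpha(v+y)]_\rho$ term by term. The purely $L$-summand $[\beta(u),\beta(v),\alpha(w)]$ is handled by the Bihom-skewsymmetry of $[\cdot,\cdot,\cdot]$. The three $\rho$-summands then swap correctly using the bilinear skew-symmetry of $\rho$ (in its two arguments from $L$) for the first interchange, and a direct rearrangement of the two $\rho$-terms involving $w$ and $z$ for the second interchange; here one uses that $\alpha^{-1}\beta$ and $\alpha_M\beta_M^{-1}$ commute with $\alpha$, $\alpha_M$.

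The main obstacle is the $3$-Bihom-Jacobi identity. Writing out
\[
[(\beta+\beta_M)^2(u_1+x_1),(\beta+\beta_M)^2(u_2+x_2),[(\beta+\beta_M)(u_3+x_3),(\beta+\beta_M)(u_4+x_4),(\alpha+\alpha_M)(u_5+x_5)]_\rho]_\rho
\]
and its two analogues produces an $L$-component and an $M$-component. The $L$-component reduces to the $3$-Bihom-Jacobi identity of $(L,[\cdot,\cdot,\cdot],\alpha,\beta)$. The $M$-component is a sum of many terms of the shape $\rho(-,-)\circ\rho(-,-)(-)$ together with terms of the shape $\rho([-,-,-],-)\circ\beta_M(-)$ (coming from applying the outer bracket to the inner $L$-part of $[u_3,u_4,u_5]$). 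I would sort these terms by which of the variables $x_1,\dots,x_5$ they act on. The terms where $x_5$ appears are controlled by axiom (3) of the representation; the terms where $x_3$ or $x_4$ appear are controlled by axiom (4) together with the skewsymmetry of $\rho$. This bookkeeping, rather than any conceptual difficulty, is the bulk of the proof; the surjectivity assumption on $\alpha$ and $\beta_M$ is used to freely insert $\alpha^{-1}\beta$ and $\alpha_M\beta_M^{-1}$ so that all powers match. Once every term on the left-hand side is matched with a term on the right-hand side via one of axioms (3) or (4), the identity is proved and the conclusion follows.
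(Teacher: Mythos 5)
Your plan is correct and follows essentially the same route as the paper's proof: a direct componentwise verification in which the commutation and multiplicativity of $\alpha+\alpha_M$, $\beta+\beta_M$ follow from representation axioms (1)--(2), the Bihom-skewsymmetry from that of $[\cdot,\cdot,\cdot]$ and of $\rho$, and the $3$-Bihom-Jacobi identity from expanding both sides and matching the $\rho\circ\rho$ and $\rho([\cdot,\cdot,\cdot],\cdot)\circ\beta_M$ terms via axioms (3) and (4). Only note that the terms acting on $x_1,x_2$ (of the form $\rho(\beta^2(u_i),\alpha^{-1}\beta([\ldots]))(\alpha_M\beta_M(x_j))$) also arise and are likewise absorbed by axiom (4) together with the skewsymmetry of $\rho$, exactly as in your treatment of the $x_3,x_4$ terms.
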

\begin{proof}
First we show $(\alpha+\alpha_M)\circ(\beta+\beta_M)=(\beta+\beta_M)\circ(\alpha+\alpha_M)$ from the fact $\alpha\circ\beta=\beta\circ\alpha, \alpha_M\circ\beta_M=\beta_M\circ\alpha_M$.

Then, we can obtain $(\alpha+\alpha_M)$ is a algebraic homomorphism,
\begin{eqnarray*}
&&[(\alpha+\alpha_M)(u+x),(\alpha+\alpha_M)(v+y),(\alpha+\alpha_M)(w+z)]_\rho\\
&=&[\alpha(u)+\alpha_M(x),\alpha(v)+\alpha_M(y),\alpha(w)+\alpha_M(z)]_\rho\\
&=&[\alpha(u),\alpha(v),\alpha(w)]+\rho(\alpha(u),\alpha(v))(\alpha_M(z))-\rho(\alpha(u),\alpha^{-1}\beta\alpha(w))(\alpha_M\beta^{-1}_M\alpha_M(y))\\
&&+\rho(\alpha(v),\alpha^{-1}\beta\alpha(w))(\alpha_M\beta^{-1}_M\alpha_M(x))\\
&=&\alpha([u,v,w])+\alpha_M\rho(u,v)(z)-\alpha_M\rho(u,\alpha^{-1}\beta(w))(\alpha_M\beta^{-1}_M(y))\\
&&+\alpha_M\rho(v,\alpha^{-1}\beta(w))(\alpha_M\beta^{-1}_M(x))\\
&=&(\alpha+\alpha_M)([u+x, v+y, w+z]_{\rho}).
\end{eqnarray*}
Similarly, $(\beta+\beta_M)$ is a algebraic homomorphism.

Next we show that $[\cdot,\cdot, \cdot]_{\rho}$ satisfies Bihom skewsymmetry,
\begin{eqnarray*}
&&[(\beta+\beta_M)(u+x), (\beta+\beta_M)(v+y), (\alpha+\alpha_M)(w+z)]_\rho\\
&=&[\beta(u)+\beta_M(x), \beta(v)+\beta_M(y), \alpha(w)+\alpha_M(z)]_\rho\\
&=&[\beta(u), \beta(v), \alpha(w)]+ \rho(\beta(u),\beta(v))(\alpha_M(z))-\rho(\beta(u), \alpha^{-1}\beta\alpha(w))(\alpha_M\beta_M^{-1}\beta_M(y))\\
&&+\rho(\beta(v), \alpha^{-1}\beta\alpha(w))(\alpha_M\beta_M^{-1}\beta_M(x))\\
&=&-[\beta(v), \beta(u), \alpha(w)]- \rho(\beta(v),\beta(u))(\alpha_M(z))+\rho(\beta(v), \alpha^{-1}\beta\alpha(w))(\alpha_M\beta_M^{-1}\beta_M(x))\\
&&-\rho(\beta(u), \alpha^{-1}\beta\alpha(w))(\alpha_M\beta_M^{-1}\beta_M(y))\\
&=&-[(\beta+\beta_M)(v+y), (\beta+\beta_M)(u+x), (\alpha+\alpha_M)(w+z)]_\rho.
\end{eqnarray*}
In the same way, we also have $[(\beta+\beta_M)(u+x), (\beta+\beta_M)(v+y), (\alpha+\alpha_M)(w+z)]_\rho=-[(\beta+\beta_M)(u+x), (\beta+\beta_M)(w+z), (\alpha+\alpha_M)(v+y)]_\rho$.

Finally, we can obtain for all $u_i\in L, x_i\in M, i=1,2,3,4,5,$
\begin{eqnarray*}
&&[(\beta+\beta_M)^2(u_1+x_1),(\beta+\beta_M)^2(u_2+x_2),[(\beta+\beta_M)(u_3+x_3),(\beta+\beta_M)(u_4+x_4),\\
&&\,(\alpha+\alpha_M)(u_5+x_5)]_\rho]_\rho\\
&=&[\beta^2(u_1)\!+\!\beta_M^2(x_1),\!\beta^2(u_2)\!+\!\beta_M^2(x_2),\![\beta(u_3)\!+\!\beta_M(x_3),\!\beta(u_4)\!+\!\beta_M(x_4),\!\alpha(u_5)\!+\!\alpha_M(x_5)]_\rho]_\rho\\
&=&[\beta^2(u_1)+\beta_M^2(x_1),\beta^2(u_2)+\beta_M^2(x_2),[\beta(u_3),\beta(u_4),\alpha(u_5)]+\rho(\beta(u_3),\beta(u_4))(\alpha_M(x_5))\\
& &-\rho(\beta(u_3),\beta(u_5))(\alpha_M(x_4))+\rho(\beta(u_4),\beta(u_5))(\alpha_M(x_3))]_\rho\\
&=&[\beta^2(u_1),\beta^2(u_2),[\beta(u_3),\beta(u_4),\alpha(u_5)]]+\rho(\beta^2(u_1),\beta^2(u_2))\big(\rho(\beta(u_3),\beta(u_4))(\alpha_M(x_5))\\
&&-\rho(\beta(u_3),\beta(u_5))(\alpha_M(x_4))+\rho(\beta(u_4),\beta(u_5))(\alpha_M(x_3))\big)\\
&&-\rho(\beta^2(u_1),\alpha^{-1}\beta([\beta(u_3),\beta(u_4),\alpha(u_5)]))(\alpha_M\beta_M(x_2))\\
&&+\rho(\beta^2(u_2),\alpha^{-1}\beta([\beta(u_3),\beta(u_4),\alpha(u_5)]))(\alpha_M\beta_M(x_1))\\
&=&[\beta^2(u_4),\beta^2(u_5),[\beta(u_1),\beta(u_2),\alpha(u_3)]]-[\beta^2(u_3),\beta^2(u_5),[\beta(u_1),\beta(u_2),\alpha(u_4)]]\\
&&+[\beta^2(u_3),\beta^2(u_4),[\beta(u_1),\beta(u_2),\alpha(u_5)]]+\rho(\beta^2(u_3),\beta^2(u_4))\rho(\beta(u_1),\beta(u_2))(\alpha_M(x_5))\\
&&+\rho(\alpha^{-1}\beta([\beta(u_1),\beta(u_2),\alpha(u_3)]),\beta^2(u_4))(\alpha_M\beta_M(x_5))\\
&&+\rho(\beta^2(u_3),\alpha^{-1}\beta([\beta(u_1),\beta(u_2),\alpha(u_4)]))(\alpha_M\beta_M(x_5))\\
&&-\rho(\beta^2(u_3),\beta^2(u_5))\rho(\beta(u_1),\beta(u_2))(\alpha_M(x_4))\\
&&-\rho(\alpha^{-1}\beta([\beta(u_1),\beta(u_2),\alpha(u_3)]),\beta^2(u_5))(\alpha_M\beta_M(x_4))\\
&&-\rho(\beta^2(u_3),\alpha^{-1}\beta([\beta(u_1),\beta(u_2),\alpha(u_5)]))(\alpha_M\beta_M(x_4))\\
&&+\rho(\beta^2(u_4),\beta^2(u_5))\rho(\beta(u_1),\beta(u_2))(\alpha_M(x_3))\\
&&+\rho(\alpha^{-1}\beta([\beta(u_1),\beta(u_2),\alpha(u_4)]),\beta^2(u_5))(\alpha_M\beta_M(x_3))\\
&&+\rho(\beta^2(u_4),\alpha^{-1}\beta([\beta(u_1),\beta(u_2),\alpha(u_5)]))(\alpha_M\beta_M(x_3))\\
&&+\rho(\beta^2(u_4)\!,\!\beta^2(u_5))\rho(\beta(u_3)\!,\!\beta(u_1))(\alpha_M(x_2))\!+\!\rho(\beta^2(u_5)\!,\!\beta^2(u_3))\rho(\beta(u_4)\!,\!\beta(u_1))(\alpha_M(x_2))\\
&&+\rho(\beta^2(u_3)\!,\!\beta^2(u_4))\rho(\beta(u_5)\!,\!\beta(u_1))(\alpha_M(x_2))\!-\!\rho(\beta^2(u_4)\!,\!\beta^2(u_5))\rho(\beta(u_3)\!,\!\beta(u_2))(\alpha_M(x_1))\\
&&-\rho(\beta^2(u_5)\!,\!\beta^2(u_3))\rho(\beta(u_4)\!,\!\beta(u_2))(\alpha_M(x_1))\!-\!\rho(\beta^2(u_3)\!,\!\beta^2(u_4))\rho(\beta(u_5)\!,\!\beta(u_2))(\alpha_M(x_1))\\
&=&[\beta^2(u_4),\beta^2(u_5),[\beta(u_1),\beta(u_2),\alpha(u_3)]]+\rho(\beta^2(u_4),\beta^2(u_5))\big(\rho(\beta(u_1),\beta(u_2))(\alpha_M(x_3))\\
&&-\rho(\beta(u_1),\beta(u_3))(\alpha_M(x_2))+\rho(\beta(u_2),\beta(u_3))(\alpha_M(x_1))\big)\\
&&-\rho(\beta^2(u_4),\alpha^{-1}\beta([\beta(u_1),\beta(u_2),\alpha(u_3)]))(\alpha_M\beta_M(x_5))\\
&&+\rho(\beta^2(u_5),\alpha^{-1}\beta([\beta(u_1),\beta(u_2),\alpha(u_3)]))(\alpha_M\beta_M(x_4))\\
&&-[\beta^2(u_3),\beta^2(u_5),[\beta(u_1),\beta(u_2),\alpha(u_4)]]-\rho(\beta^2(u_3),\beta^2(u_5))\big(\rho(\beta(u_1),\beta(u_2))(\alpha_M(x_4))\\
&&-\rho(\beta(u_1),\beta(u_4))(\alpha_M(x_2))+\rho(\beta(u_2),\beta(u_4))(\alpha_M(x_1))\big)\\
&&+\rho(\beta^2(u_3),\alpha^{-1}\beta([\beta(u_1),\beta(u_2),\alpha(u_4)]))(\alpha_M\beta_M(x_5))\\
&&-\rho(\beta^2(u_5),\alpha^{-1}\beta([\beta(u_1),\beta(u_2),\alpha(u_4)]))(\alpha_M\beta_M(x_3))\\
&&[\beta^2(u_3),\beta^2(u_4),[\beta(u_1),\beta(u_2),\alpha(u_5)]]+\rho(\beta^2(u_3),\beta^2(u_4))\big(\rho(\beta(u_1),\beta(u_2))(\alpha_M(x_5))\\
&&-\rho(\beta(u_1),\beta(u_5))(\alpha_M(x_2))+\rho(\beta(u_2),\beta(u_5))(\alpha_M(x_1))\big)\\
&&-\rho(\beta^2(u_3),\alpha^{-1}\beta([\beta(u_1),\beta(u_2),\alpha(u_4)]))(\alpha_M\beta_M(x_4))\\
&&+\rho(\beta^2(u_4),\alpha^{-1}\beta([\beta(u_1),\beta(u_2),\alpha(u_5)]))(\alpha_M\beta_M(x_3))\\
&=&[(\beta+\beta_M)^2(u_4+x_4),(\beta+\beta_M)^2(u_5+x_5),[(\beta+\beta_M)(u_1+x_1),(\beta+\beta_M)(u_2+x_2),\\
&&\,(\alpha+\alpha_M)(u_3+x_3)]_\rho]_\rho\!-\![(\beta\!+\!\beta_M)^2(u_3\!+\!x_3),(\beta\!+\!\beta_M)^2(u_5\!+\!x_5),[(\beta\!+\!\beta_M)(u_1+x_1),\\
&&\,(\beta+\beta_M)(u_2+x_2),(\alpha+\alpha_M)(u_4+x_4)]_\rho]_\rho\!+\![(\beta\!+\!\beta_M)^2(u_3\!+\!x_3),(\beta+\beta_M)^2(u_4+x_4),\\
&&\,[(\beta+\beta_M)(u_1+x_1),(\beta+\beta_M)(u_2+x_2),(\alpha+\alpha_M)(u_5+x_5)]_\rho]_\rho.
\end{eqnarray*}

Thus, $L\ltimes M:=(L\oplus M, [\cdot,\cdot,\cdot]_\rho,\alpha+ \alpha_M,\beta+\beta_M)$ is a $3$-Bihom-Lie algebra.
\end{proof}

\begin{defn}
Let $(L, [\cdot, \cdot, \cdot], \alpha,\beta)$ be a $3$-Bihom-Lie algebra and  $(M, \rho, \alpha_M, \beta_M)$ be a
representation of $L$. If $\theta:L\times L\times L\rightarrow M$ is a $3$-linear map and satisfies
\begin{enumerate}[(1)]
\item $\alpha_M\theta(x_1,x_2,x_3)=\theta(\alpha(x_1),\alpha(x_2),\alpha(x_3)),$
\item $\beta_M\theta(x_1,x_2,x_3)=\theta(\beta(x_1),\beta(x_2),\beta(x_3)),$
\item $\theta(\beta(x_1),\beta(x_2),\alpha(x_3))=-\theta(\beta(x_2),\beta(x_1),\alpha(x_3))=-\theta(\beta(x_1),\beta(x_3),\alpha(x_2)),$
\item $\quad\theta(\beta^2(x_1),\beta^2(x_2),[\beta(x_3),\beta(x_4),\alpha(x_5)])+\rho(\beta^2(x_1),\beta^2(x_2))\theta(\beta(x_3),\beta(x_4),\alpha(x_5))\\
=\theta(\beta^2(x_4),\beta^2(x_5),[\beta(x_1),\beta(x_2),\alpha(x_3)])+\rho(\beta^2(x_4),\beta^2(x_5))\theta(\beta(x_1),\beta(x_2),\alpha(x_3))\\
~~~ -\theta(\beta^2(x_3),\beta^2(x_5),[\beta(x_1),\beta(x_2),\alpha(x_4)])-\rho(\beta^2(x_3),\beta^2(x_5))\theta(\beta(x_1),\beta(x_2),\alpha(x_4))\\
~~~ +\theta(\beta^2(x_3),\beta^2(x_4),[\beta(x_1),\beta(x_2),\alpha(x_5)])+\rho(\beta^2(x_3),\beta^2(x_4))\theta(\beta(x_1),\beta(x_2),\alpha(x_5)),$
\end{enumerate}
where $ x_1,x_2,x_3,x_4,x_5 \in L$.
Then $\theta$ is called a $3$-cocycle associated with $\rho$.
\end{defn}

\begin{prop}\label{prop1}
Let $(L, [\cdot, \cdot, \cdot], \alpha,\beta)$ be a $3$-Bihom-Lie algebra and  $(M, \rho, \alpha_M, \beta_M)$ be a
representation of $L$. Assume that the maps $\alpha$ and $\beta_M$ are surjective. If $\theta$ is a $3$-cocycle associated with $\rho$. Then $(L\oplus M, [\cdot,\cdot,\cdot]_\theta,\alpha+ \alpha_M,\beta+\beta_M)$ is a $3$-Bihom-Lie algebra, where
$\alpha+\alpha_M,\beta+\beta_M:L\oplus M\rightarrow L\oplus M$ are defined by $(\alpha+\alpha_M)(u+x)=\alpha(u)+\alpha_M(x)$ and $(\beta+\beta_M)(u+x)=\beta(u)+\beta_M(x)$, and the bracket $[\cdot,\cdot,\cdot]_\theta$ is
defined by
\begin{eqnarray*}
&&[u+x, v+y, w+z]_{\theta}\\
&=&\![u, v,z]\!+\!\theta(u,v,w)\!+\!\rho(u,v)(z)\!-\!\rho(u, \alpha^{\!-\!1}\beta(w))(\alpha_M\beta_M^{\!-\!1}(y))\!+\!\rho(v, \alpha^{\!-\!1}\beta(w))(\alpha_M\beta_M^{\!-\!1}(x)),
\end{eqnarray*}
for all $u, v, w \in L$ and $x,y,z\in M$. $(L\oplus M, [\cdot,\cdot,\cdot]_\theta,\alpha+ \alpha_M,\beta+\beta_M)$ is called the $T_\theta$-extension of $(L, [\cdot, \cdot, \cdot], \alpha,\beta)$ by $M$, denoted by $T_\theta{(L)}$.
\end{prop}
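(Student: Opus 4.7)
My plan is to leverage Proposition \ref{product} which already establishes that the semidirect product $L\ltimes M$ with bracket $[\cdot,\cdot,\cdot]_\rho$ is a $3$-Bihom-Lie algebra. Observe that the new bracket decomposes as
\[
[u+x,v+y,w+z]_\theta = [u+x,v+y,w+z]_\rho + \theta(u,v,w),
\]
so the verification reduces to checking that the additional $\theta$-terms are compatible with the four defining axioms, given the cocycle hypotheses on $\theta$.

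First I would note that commutativity $(\alpha+\alpha_M)\circ(\beta+\beta_M) = (\beta+\beta_M)\circ(\alpha+\alpha_M)$ does not involve the bracket and is inherited from Proposition \ref{product}. Next, for the multiplicativity of $\alpha+\alpha_M$ with respect to $[\cdot,\cdot,\cdot]_\theta$, the $\rho$-part is handled by Proposition \ref{product}; cocycle condition (1) then ensures
\[
(\alpha+\alpha_M)(\theta(u,v,w)) = \alpha_M\theta(u,v,w) = \theta(\alpha(u),\alpha(v),\alpha(w)),
\]
giving multiplicativity of $\alpha+\alpha_M$, and the analogous check for $\beta+\beta_M$ uses condition (2). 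For the Bihom-skewsymmetry, one writes out both sides of
\[
[(\beta+\beta_M)(u+x),(\beta+\beta_M)(v+y),(\alpha+\alpha_M)(w+z)]_\theta,
\]
observes that the $\rho$-part is antisymmetric by Proposition \ref{product}, and then invokes cocycle condition (3) to match the $\theta(\beta(u),\beta(v),\alpha(w))$ terms.

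The main effort goes into verifying the $3$-Bihom-Jacobi identity. Expanding both sides of the identity for $[\cdot,\cdot,\cdot]_\theta$ produces three groups of terms: (a) those coming from $[\cdot,\cdot,\cdot]_\rho$ alone, which cancel by Proposition \ref{product}; (b) pure $\theta$-contributions of the form $\theta(\beta^2(\cdot),\beta^2(\cdot),[\beta(\cdot),\beta(\cdot),\alpha(\cdot)])$; and (c) mixed terms of the form $\rho(\beta^2(\cdot),\beta^2(\cdot))\theta(\beta(\cdot),\beta(\cdot),\alpha(\cdot))$ arising from applying $\rho$ to a $\theta$-output. Grouping (b) and (c) on each side and matching them via cocycle condition (4) completes the identity. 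Here I expect the main obstacle to lie in bookkeeping: carefully tracking how each of the five inputs $u_i+x_i$ propagates through the twisted bracket so that the $\theta$-terms coming from outer and inner brackets land in exactly the shape appearing in condition (4); the other terms that might arise, involving $\rho$ applied to a $\theta$-value as a module element, must be shown to cancel against each other using skewsymmetry of $\rho$ and condition (3), or to be absent because $\theta$-outputs are not fed back into further bracket slots.

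Finally, having verified all four axioms, I conclude that $(L\oplus M,[\cdot,\cdot,\cdot]_\theta,\alpha+\alpha_M,\beta+\beta_M)$ is indeed a $3$-Bihom-Lie algebra, which is by definition the $T_\theta$-extension $T_\theta(L)$.
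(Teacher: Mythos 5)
Your proposal is correct and is essentially the paper's own argument: the paper proves this proposition simply by remarking that the verification is ``similar to Proposition \ref{product}'', and your decomposition $[\cdot,\cdot,\cdot]_\theta=[\cdot,\cdot,\cdot]_\rho+\theta(\cdot,\cdot,\cdot)$ with cocycle conditions (1)--(4) handling the extra terms is exactly that verification made explicit. In particular, your observation that in the Bihom-Jacobi identity the only new contributions are the pure $\theta$-terms and the terms $\rho(\beta^2(\cdot),\beta^2(\cdot))\theta(\beta(\cdot),\beta(\cdot),\alpha(\cdot))$, which match precisely via condition (4), is the right accounting.
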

\begin{proof}
The proof is similar to Proposition \ref{product}.
\end{proof}

\begin{prop}
Let $(L, [\cdot, \cdot, \cdot], \alpha,\beta)$ be a $3$-Bihom-Lie algebra and  $(M, \rho, \alpha_M, \beta_M)$ be a
representation of $L$. Assume that the maps $\alpha$ and $\beta$ are surjective. $f:L\rightarrow M$ is a linear map such that $f\circ\alpha=\alpha_M\circ f$ and $f\circ\beta=\beta_M\circ f$. Then the $3$-linear map $\theta_f:L\times L\times L\rightarrow M$ given by $$\theta_f(x,y,z)=f([x,y,z])\!-\!\rho(x,y)f(z)\!+\!\rho(x,\alpha^{-1}\beta(z))f(\alpha\beta^{-1}(y))\!-\!\rho(y,\alpha^{-1}\beta(z))f(\alpha\beta^{-1}(x)),$$  for all $x,y,z\in L$, is a $3$-cocycle associated with $\rho$.
\end{prop}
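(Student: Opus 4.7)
The plan is to verify, in order, the four conditions in the definition of a 3-cocycle for the explicit map $\theta_f$. All four verifications are direct substitutions; the work lies in keeping track of the placement of $\alpha^{\pm 1}\beta^{\pm 1}$ and in applying at the right moment the 3-Bihom-Jacobi identity on $L$ together with the representation axioms on $\rho$.

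First I would check conditions (1) and (2), the compatibility of $\theta_f$ with $\alpha_M$ and $\beta_M$. Evaluating $\alpha_M\theta_f(x_1,x_2,x_3)$ term by term, I use $f\circ\alpha=\alpha_M\circ f$, $\alpha\circ([\cdot,\cdot,\cdot])=[\alpha,\alpha,\alpha]$, the commutativity $\alpha\beta=\beta\alpha$, and the representation axiom $\alpha_M\circ\rho(u,v)=\rho(\alpha(u),\alpha(v))\circ\alpha_M$. Each of the four summands in $\theta_f$ transforms to the corresponding summand of $\theta_f(\alpha(x_1),\alpha(x_2),\alpha(x_3))$. The argument for $\beta_M$ is identical, using $f\circ\beta=\beta_M\circ f$ and the $\beta_M$-version of the representation axiom.

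Next I would check condition (3), the Bihom-skewsymmetry. After substituting $(\beta(x_1),\beta(x_2),\alpha(x_3))$ into the formula for $\theta_f$, the first summand $f([\beta(x_1),\beta(x_2),\alpha(x_3)])$ changes sign under swapping the first two (resp. the last two when mixed with the appropriate twists) by the Bihom-skewsymmetry of $[\cdot,\cdot,\cdot]$, while the three $\rho$-summands change sign by the skewsymmetry of $\rho$ after rewriting $\alpha^{-1}\beta$ and $\alpha\beta^{-1}$ through $\alpha$ and $\beta$ using commutativity. The two Bihom-skewsymmetry relations then follow by matching summands.

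The main obstacle is condition (4), the cocycle identity; this is where the 3-Bihom-Jacobi identity is needed. My plan is to expand $\theta_f(\beta^2(x_1),\beta^2(x_2),[\beta(x_3),\beta(x_4),\alpha(x_5)])$ and the three companion terms on the right-hand side, each via the definition of $\theta_f$. The $f$-of-bracket contributions collect to $f$ applied to a combination of nested triple brackets, which vanishes by the 3-Bihom-Jacobi identity of $(L,[\cdot,\cdot,\cdot],\alpha,\beta)$ once pulled back through $f$. The remaining $\rho$-contributions are reorganized using (i) the 3-Bihom-skewsymmetry of $[\cdot,\cdot,\cdot]$, (ii) the skewsymmetry and $\alpha_M,\beta_M$-equivariance of $\rho$, and (iii) the two structural axioms of a representation (conditions (3) and (4) in the definition of $\rho$), which are precisely the identities that turn a nested $\rho(\beta(\cdot),\beta(\cdot))\rho(\cdot,\cdot)$ into a $\rho$ of a single triple bracket plus corrections. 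After these substitutions the identity reduces to an equation between sums of $\rho$-expressions that cancel termwise. The bookkeeping is the principal difficulty: one must carefully track how $\alpha^{-1}\beta$ interacts with $\beta^2$ on the outer slots and with $\alpha\beta^{-1}$ on the $f$-slots, and the surjectivity of $\alpha$ and $\beta$ is what allows these inverses to be used on all elements when reorganizing the terms. Once the matching is done the conclusion follows, establishing that $\theta_f$ is a 3-cocycle associated with $\rho$.
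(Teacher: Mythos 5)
Your proposal is correct and follows essentially the same route as the paper: a term-by-term verification of the four cocycle conditions, with conditions (1)--(3) handled by $f\circ\alpha=\alpha_M\circ f$, $f\circ\beta=\beta_M\circ f$, the homomorphism property of $\alpha,\beta$ and the (skew)symmetry/equivariance of $\rho$, and condition (4) handled by expanding all $\theta_f$ and $\rho\circ\theta_f$ terms, matching the $f$-of-bracket pieces via the $3$-Bihom-Jacobi identity, and regrouping the $\rho\rho f$ pieces via representation axioms (3) and (4). This is exactly the computation carried out in the paper, so no further comparison is needed.
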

\begin{proof}
$\forall\, x,y,z\in L$, we have
\begin{eqnarray*}
&&\theta_f(\alpha(x),\alpha(y),\alpha(z))\\
&=&f([\alpha(x),\alpha(y),\alpha(z)])-\rho(\alpha(x),\alpha(y))f(\alpha(z))+\rho(\alpha(x),\alpha\alpha^{-1}\beta(z))f(\alpha\alpha\beta^{-1}(y))\\
&&-\rho(\alpha(y),\alpha\alpha^{-1}\beta(z))f(\alpha\alpha\beta^{-1}(x))\\
&=&\alpha_Mf([x,y,z])-\alpha_M\rho(x,y)f(z)+\alpha_M\rho(x,\alpha^{-1}\beta(z))f(\alpha\beta^{-1}(y))\\
&&-\alpha_M\rho(y,\alpha^{-1}\beta(z))f(\alpha\beta^{-1}(x))\\
&=&\alpha_M\theta_f(x,y,z).
\end{eqnarray*}
We also have $\theta_f(\beta(x),\beta(y),\beta(z))=\beta_M\theta_f(x,y,z)$.

Next, a calculation shows that
\begin{eqnarray*}
&&\theta_f(\beta(x),\beta(y),\alpha(z))\\
&=&f([\beta(x),\beta(y),\alpha(z)])-\rho(\beta(x),\beta(y))f(\alpha(z))+\rho(\beta(x),\alpha\alpha^{-1}\beta(z))f(\alpha\beta^{-1}\beta(y))\\
&&-\rho(\beta(y),\alpha\alpha^{-1}\beta(z))f(\alpha\beta^{-1}\beta(x))\\
&=&-f([\beta(y),\beta(x),\alpha(z)])+\rho(\beta(y),\beta(x))f(\alpha(z))-\rho(\beta(y),\alpha\alpha^{-1}\beta(z))f(\alpha\beta^{-1}\beta(x))\\
&&+\rho(\beta(x),\alpha\alpha^{-1}\beta(z))f(\alpha\beta^{-1}\beta(y))\\
&=&-\theta_f(\beta(y),\beta(x),\alpha(z)).
\end{eqnarray*}
Similarly, we can get $\theta_f(\beta(x),\beta(y),\alpha(z))=-\theta_f(\beta(x),\beta(z),\alpha(y))$.

Finally, $\forall\, x_1,x_2,x_3,x_4,x_5 \in L$, we have
\begin{eqnarray*}
&&\theta_f(\beta^2(x_1),\beta^2(x_2),[\beta(x_3),\beta(x_4),\alpha(x_5)])+\rho(\beta^2(x_1),\beta^2(x_2))\theta_f(\beta(x_3),\beta(x_4),\alpha(x_5))\\
&=&f([\beta^2(x_1),\beta^2(x_2),[\beta(x_3),\beta(x_4),\alpha(x_5)]])-\rho(\beta^2(x_1),\beta^2(x_2))f([\beta(x_3),\beta(x_4),\alpha(x_5)])\\
&&+\rho(\beta^2(x_1),\alpha^{-1}\beta([\beta(x_3),\beta(x_4),\alpha(x_5)]))f\alpha\beta(x_2)\\
&&-\rho(\beta^2(x_2),\alpha^{-1}\beta([\beta(x_3),\beta(x_4),\alpha(x_5)]))f\alpha\beta(x_1)\\
&&+\rho(\beta^2(x_1),\beta^2(x_2))\big(f([\beta(x_3),\beta(x_4),\alpha(x_5)])-\rho(\beta(x_3),\beta(x_4))f\alpha(x_5)\\
&&+\rho(\beta(x_3),\beta(x_5))f\alpha(x_4)-\rho(\beta(x_4),\beta(x_5))f\alpha(x_3)\big)\\
&=&f([\beta^2(x_1),\beta^2(x_2),[\beta(x_3),\beta(x_4),\alpha(x_5)]])\\
&&+\rho(\beta^2(x_1),\alpha^{-1}\beta([\beta(x_3),\beta(x_4),\alpha(x_5)]))f\alpha\beta(x_2)\\
&&-\rho(\beta^2(x_2),\alpha^{-1}\beta([\beta(x_3),\beta(x_4),\alpha(x_5)]))f\alpha\beta(x_1)\\
&&+\rho(\beta^2(x_1),\beta^2(x_2))\big(-\rho(\beta(x_3),\beta(x_4))f\alpha(x_5)+\rho(\beta(x_3),\beta(x_5))f\alpha(x_4)\\
&&-\rho(\beta(x_4),\beta(x_5))f\alpha(x_3)\big)\\
&=&f([\beta^2(x_4),\beta^2(x_5),[\beta(x_1),\beta(x_2),\alpha(x_3)]])-f([\beta^2(x_3),\beta^2(x_5),[\beta(x_1),\beta(x_2),\alpha(x_4)]])\\
&&+f([\beta^2(x_3),\beta^2(x_4),[\beta(x_1),\beta(x_2),\alpha(x_5)]])-\rho(\beta^2(x_4),\beta^2(x_5))\rho(\beta(x_3),\beta(x_1))f\alpha(x_2)\\
&&-\rho(\beta^2(x_5),\beta^2(x_3))\rho(\beta(x_4),\beta(x_1))f\alpha(x_2)-\rho(\beta^2(x_3),\beta^2(x_4))\rho(\beta(x_5),\beta(x_1))f\alpha(x_2)\\
&&+\rho(\beta^2(x_4),\beta^2(x_5))\rho(\beta(x_3),\beta(x_2))f\alpha(x_1)+\rho(\beta^2(x_5),\beta^2(x_3))\rho(\beta(x_4),\beta(x_2))f\alpha(x_1)\\
&&+\rho(\beta^2(x_3),\beta^2(x_4))\rho(\beta(x_5),\beta(x_2))f\alpha(x_1)-\rho(\beta^2(x_3),\beta^2(x_4))\rho(\beta(x_1),\beta(x_2))f\alpha(x_5)\\
&&-\rho(\alpha^{-1}\beta([\beta(x_1),\beta(x_2),\alpha(x_3)]),\beta^2(x_4))f\alpha\beta(x_5)\\
&&-\rho(\beta^2(x_3)\!,\!\alpha^{-1}\beta([\beta(x_1)\!,\!\beta(x_2)\!,\!\alpha(x_4)])\!)\!f\alpha\beta(x_5)\!+\!\rho(\beta^2(x_3)\!,\!\beta^2(x_5)\!)\!\rho(\beta(x_1)\!,\!\beta(x_2))f\alpha(x_4)\\
&&+\rho(\alpha^{-1}\beta([\beta(x_1),\beta(x_2),\alpha(x_3)]),\beta^2(x_5))f\alpha\beta(x_4)\\
&&+\rho(\beta^2(x_3),\alpha^{-1}\beta([\beta(x_1),\beta(x_2),\alpha(x_5)]))f\alpha\beta(x_4)\\
&&-\rho(\beta^2(x_4),\beta^2(x_5))\rho(\beta(x_1),\beta(x_2))f\alpha(x_3)\\
&&-\rho(\alpha^{-1}\beta([\beta(x_1),\beta(x_2),\alpha(x_4)]),\beta^2(x_5))f\alpha\beta(x_3)\\
&&-\rho(\beta^2(x_4),\alpha^{-1}\beta([\beta(x_1),\beta(x_2),\alpha(x_5)]))f\alpha\beta(x_3)\\
&=&f([\beta^2(x_4)\!,\!\beta^2(x_5)\!,\![\beta(x_1)\!,\!\beta(x_2)\!,\!\alpha(x_3)]])\!+\!\rho(\beta^2(x_4),\alpha^{-1}\beta([\beta(x_1),\beta(x_2),\alpha(x_3)]))f\alpha\beta(x_5)\\
&&-\rho(\beta^2(x_5),\alpha^{-1}\beta([\beta(x_1),\beta(x_2),\alpha(x_3)]))f\alpha\beta(x_4)\\
&&+\rho(\beta^2(x_4),\beta^2(x_5))\big(-\rho(\beta(x_1),\beta(x_2))f\alpha(x_3)+\rho(\beta(x_1),\beta(x_3))f\alpha(x_2)\\
&&-\rho(\beta(x_2),\beta(x_3))f\alpha(x_1)\big)\\
&&-f([\beta^2(x_3)\!,\!\beta^2(x_5)\!,\![\beta(x_1)\!,\!\beta(x_2)\!,\!\alpha(x_4)]])\!-\!\rho(\beta^2(x_3)\!,\!\alpha^{-1}\beta([\beta(x_1)\!,\!\beta(x_2)\!,\!\alpha(x_4)]))f\alpha\beta(x_5)\\
&&+\rho(\beta^2(x_5),\alpha^{-1}\beta([\beta(x_1),\beta(x_2),\alpha(x_4)]))f\alpha\beta(x_3)\\
&&-\rho(\beta^2(x_3),\beta^2(x_5))\big(-\rho(\beta(x_1),\beta(x_2))f\alpha(x_4)+\rho(\beta(x_1),\beta(x_4))f\alpha(x_2)\\
&&-\rho(\beta(x_2),\beta(x_4))f\alpha(x_1)\big)\\
&&+f([\beta^2(x_3)\!,\!\beta^2(x_4)\!,\![\beta(x_1)\!,\!\beta(x_2)\!,\!\alpha(x_5)]])\!+\!\rho(\beta^2(x_3)\!,\!\alpha^{-1}\beta([\beta(x_1)\!,\!\beta(x_2)\!,\!\alpha(x_5)]))f\alpha\beta(x_4)\\
&&-\rho(\beta^2(x_4),\alpha^{-1}\beta([\beta(x_1),\beta(x_2),\alpha(x_5)]))f\alpha\beta(x_3)\\
&&+\rho(\beta^2(x_3),\beta^2(x_4))\big(-\rho(\beta(x_1),\beta(x_2))f\alpha(x_5)+\rho(\beta(x_1),\beta(x_5))f\alpha(x_2)\\
&&-\rho(\beta(x_2),\beta(x_5))f\alpha(x_1)\big)\\
&=&+\theta_f(\beta^2(x_4),\beta^2(x_5),[\beta(x_1),\beta(x_2),\alpha(x_3)])+\rho(\beta^2(x_4),\beta^2(x_5))\theta_f(\beta(x_1),\beta(x_2),\alpha(x_3))\\
&&-\theta_f(\beta^2(x_3),\beta^2(x_5),[\beta(x_1),\beta(x_2),\alpha(x_4)])-\rho(\beta^2(x_3),\beta^2(x_5))\theta_f(\beta(x_1),\beta(x_2),\alpha(x_4))\\
&&+\theta_f(\beta^2(x_3),\beta^2(x_4),[\beta(x_1),\beta(x_2),\alpha(x_5)])+\rho(\beta^2(x_3),\beta^2(x_4))\theta_f(\beta(x_1),\beta(x_2),\alpha(x_5)).
\end{eqnarray*}

It follows that $\theta_f$ is a $3$-cocycle associated with $\rho$.
\end{proof}

\begin{cor}
Under the above notations, $\theta+\theta_f$ is a $3$-cocycle associated with $\rho$.
\end{cor}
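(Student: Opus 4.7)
The plan is to exploit the observation that the four defining conditions of a $3$-cocycle associated with $\rho$ are each $\mathbb{K}$-linear in the cochain $\theta$, since $\rho$, $\alpha$, $\beta$, $\alpha_M$, $\beta_M$ and the bracket $[\cdot,\cdot,\cdot]$ are all fixed and none of the conditions contains a product of two copies of $\theta$. Granted this, the conclusion follows at once: $\theta$ is a $3$-cocycle by hypothesis, and $\theta_f$ is a $3$-cocycle by the preceding proposition, so their sum satisfies the same linear identities.

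Concretely, I would first record each defining condition schematically as $C_i(\theta)=0$ for $i=1,\ldots,4$, where $C_i$ sends a $3$-linear map $L\times L\times L\to M$ into an appropriate target space. Conditions $(1)$--$(3)$ (compatibility with $\alpha_M$, compatibility with $\beta_M$, and Bihom-skewsymmetry) are $\mathbb{K}$-linear in $\theta$ by inspection. Condition $(4)$ consists of a finite sum of terms of the form $\theta(-,-,-)$ and $\rho(-,-)\theta(-,-,-)$, whose inner arguments depend only on $x_1,\ldots,x_5$ through the fixed data $\alpha$, $\beta$ and $[\cdot,\cdot,\cdot]$; in particular each term is $\mathbb{K}$-linear in $\theta$, hence so is the whole of $(4)$. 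Consequently $C_i(\theta+\theta_f)=C_i(\theta)+C_i(\theta_f)=0+0=0$ for each $i$, and $\theta+\theta_f$ is a $3$-cocycle associated with $\rho$.

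No substantive obstacle is expected: the entire content of the corollary is the linearity of the cocycle conditions, a remark normally left tacit. All the computational work has already been carried out in the previous proposition, which verified the genuinely nontrivial case $\theta_f$.
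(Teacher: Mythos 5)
Your proof is correct and matches the paper's (tacit) reasoning: the paper states this corollary without proof precisely because the four cocycle conditions are linear in the cochain, so the sum of the two cocycles $\theta$ and $\theta_f$ (the latter verified in the preceding proposition) is again a cocycle. Nothing further is needed.
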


\begin{prop}
Under the above notations, $\sigma:T_\theta(L)\rightarrow T_{\theta+\theta_f}(L)$ is a isomorphism of $3$-Bihom-Lie algebras, where $\sigma(v+x)=v+f(v)+x,~ \forall\, v\in L, x \in M$.
\end{prop}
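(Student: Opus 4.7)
The plan is to verify three things: $\sigma$ is a linear bijection, $\sigma$ intertwines the structure maps $\alpha+\alpha_M$ and $\beta+\beta_M$, and $\sigma$ preserves the brackets. For bijectivity, I would simply write down the candidate inverse $\sigma^{-1}(v+x)=v+x-f(v)$ and check both compositions give the identity, which is immediate from the direct-sum decomposition of $L\oplus M$.

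For the compatibility with $\alpha+\alpha_M$, I would compute $\sigma\circ(\alpha+\alpha_M)(v+x)=\alpha(v)+f(\alpha(v))+\alpha_M(x)$ and $(\alpha+\alpha_M)\circ\sigma(v+x)=\alpha(v)+\alpha_M(f(v))+\alpha_M(x)$, and use the hypothesis $f\circ\alpha=\alpha_M\circ f$ to conclude equality; the argument for $\beta+\beta_M$ is verbatim with $\beta$'s in place of $\alpha$'s.

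The main (and essentially only substantive) step is showing that
\[
\sigma([u+x,v+y,w+z]_\theta)=[\sigma(u+x),\sigma(v+y),\sigma(w+z)]_{\theta+\theta_f}.
\]
I would expand both sides using the bracket formula from Proposition \ref{prop1}. On the left, $\sigma$ inserts a single extra summand $f([u,v,w])$. On the right, after expansion the summands split into three groups: the original $\theta$-bracket terms (which match the left side verbatim), an extra $\theta_f(u,v,w)$ coming from the cocycle shift, and three cross-terms
\[
\rho(u,v)f(w)-\rho(u,\alpha^{-1}\beta(w))(\alpha_M\beta_M^{-1}f(v))+\rho(v,\alpha^{-1}\beta(w))(\alpha_M\beta_M^{-1}f(u))
\]
coming from the $f(u),f(v),f(w)$ contributions. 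The key observation is that because $f$ intertwines $\alpha,\beta$ with $\alpha_M,\beta_M$, one has $\alpha_M\beta_M^{-1}\circ f=f\circ\alpha\beta^{-1}$, so these three cross-terms are precisely the negatives of the last three terms in the definition of $\theta_f(u,v,w)$. Consequently, they cancel against the corresponding pieces of $\theta_f(u,v,w)$, leaving only $f([u,v,w])$, which matches the extra summand produced by $\sigma$ on the left.

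I expect no substantive obstacle: the definition of $\theta_f$ was engineered exactly to absorb these cross-terms, so the bracket-preservation calculation is a bookkeeping verification. The only thing to be careful about is tracking the placement of $\alpha^{-1}\beta$ and $\alpha_M\beta_M^{-1}$ through the intertwining identity $\alpha_M\beta_M^{-1}\circ f=f\circ\alpha\beta^{-1}$, which follows from $f\circ\alpha=\alpha_M\circ f$ and $f\circ\beta=\beta_M\circ f$ together with surjectivity (equivalently, invertibility on the relevant subspaces) of $\alpha$ and $\beta$ from the standing hypotheses.
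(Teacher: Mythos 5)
Your proposal is correct and follows essentially the same route as the paper: check bijectivity, check compatibility with $\alpha+\alpha_M$ and $\beta+\beta_M$ via $f\circ\alpha=\alpha_M\circ f$, $f\circ\beta=\beta_M\circ f$, and then expand the bracket so that the $\rho$-terms involving $f(u),f(v),f(w)$ cancel against the last three terms of $\theta_f(u,v,w)$ via the identity $\alpha_M\beta_M^{-1}\circ f=f\circ\alpha\beta^{-1}$, leaving exactly $f([u,v,w])$. The paper's computation performs precisely this cancellation (using the same intertwining identity implicitly), so there is no substantive difference.
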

\begin{proof}
It is clear that $\sigma$ is a bijection. Next, for every $v_i\in L, x_i\in M, i=1,2,3$, we have $\sigma\circ(\alpha+\alpha_M)(v_1+x_1)=\sigma(\alpha(v_1)+\alpha_M(x_1))=\alpha(v_1)+f\alpha(v_1)+\alpha_M(x_1)=\alpha(v_1)+\alpha_Mf(v_1)+\alpha_M(x_1)=(\alpha+\alpha_M)(v_1+f(v_1)+x_1)=(\alpha+\alpha_M)\circ\sigma(v_1+x_1)$, i.e. $\sigma\circ(\alpha+\alpha_M)=(\alpha+\alpha_M)\circ\sigma$. Similarly, $\sigma\circ(\beta+\beta_M)=(\beta+\beta_M)\circ\sigma$.

Then we can obtain
\begin{eqnarray*}
&&[\sigma(v_1+x_1),\sigma(v_2+x_2),\sigma(v_3+x_3)]_{\theta+\theta_f}\\
&=&[v_1+f(v_1)+x_1,v_2+f(v_2)+x_2,v_3+f(v_3)+x_3]_{\theta+\theta_f}\\
&=&[v_1,v_2,v_3]+(\theta+\theta_f)(v_1,v_2,v_3)+\rho(v_1,v_2)(f(v_3)+x_3)\\
&&-\rho(v_1,\alpha^{-1}\beta(v_3))\alpha_M\beta_M^{-1}(f(v_2)+x_2)+\rho(v_2,\alpha^{-1}\beta(v_3))\alpha_M\beta_M^{-1}(f(v_1)+x_1)\\
&=&[v_1,v_2,v_3]+\theta(v_1,v_2,v_3)+f([v_1,v_2,v_3])-\rho(v_1,v_2)f(v_3)+\rho(v_1,\alpha^{-1}\beta(v_3))f\alpha\beta^{-1}(v_2)\\
&&-\rho(v_2,\alpha^{-1}\beta(v_3))f\alpha\beta^{-1}(v_1)+\rho(v_1,v_2)f(v_3)-\rho(v_1,\alpha^{-1}\beta(v_3))\alpha_M\beta^{-1}_Mf(v_2)\\
&&+\rho(v_2,\alpha^{-1}\beta(v_3))\alpha_M\beta^{-1}_Mf(v_1)+\rho(v_1,v_2)(x_3)-\rho(v_1,\alpha^{-1}\beta(v_3))\alpha_M\beta^{-1}_M(x_2)\\
&&+\rho(v_2,\alpha^{-1}\beta(v_3))\alpha_M\beta^{-1}_M(x_1)\\
&=&[v_1,v_2,v_3]+f([v_1,v_2,v_3])+\theta(v_1,v_2,v_3)+\rho(v_1,v_2)(x_3)-\rho(v_1,\alpha^{-1}\beta(v_3))\alpha_M\beta^{-1}_M(x_2)\\
&&+\rho(v_2,\alpha^{-1}\beta(v_3))\alpha_M\beta^{-1}_M(x_1)\\
&=&\sigma\big([v_1,v_2,v_3]+\theta(v_1,v_2,v_3)+\rho(v_1,v_2)(x_3)-\rho(v_1,\alpha^{-1}\beta(v_3))\alpha_M\beta^{-1}_M(x_2)\\
&&+\rho(v_2,\alpha^{-1}\beta(v_3))\alpha_M\beta^{-1}_M(x_1)\big)\\
&=&\sigma([v_1+x_1,v_2+x_2,v_3+x_3]_\theta).
\end{eqnarray*}

That shows $\sigma$ is a isomorphism.
\end{proof}

\section{$T_\theta^*$-extensions of $3$-Bihom-Lie algebras}
The method of $T_\theta^*$-extension was introduced in \cite{MB} and has already been used for $3$-Lie algebras in \cite{RWYZ} and $3$-hom-Lie algebras in \cite{YLY}. Now we will
generalize it to $3$-Bihom-Lie algebras.
\begin{defn}
Let $(L,[\cdot,\cdot,\cdot],\alpha,\beta)$ be a $3$-Bihom-Lie algebra. A bilinear form  $f$ on $L$ is said to be nondegenerate if
$$L^\perp=\{x\in L~|~f(x,y)=0, ~\forall\, y\in L\}=0;$$
$\alpha\beta$-invariant if for all $x_1,x_2,x_3,x_4\in L$,
$$f([\beta(x_1),\beta(x_2),\alpha(x_3)],\alpha(x_4))=-f(\alpha(x_3),[\beta(x_1),\beta(x_2),\alpha(x_4)]);$$
symmetric if
$$f(x,y)=f(y,x).$$
A subspace $I$ of $L$ is called isotropic if $I\subseteq I^\bot$.
\end{defn}

\begin{defn}
Let $(L,[\cdot,\cdot,\cdot],\alpha,\beta)$ be a $3$-Bihom-Lie algebra over a field $\mathbb{K}$. If $L$ admits a nondegenerate, $\alpha\beta$-invariant and symmetric bilinear form $f$ such that $\alpha$, $\beta$ are $f$-symmetric $($i.e. $f(\alpha(x),y)=f(x,\alpha(y)),~ f(\beta(x),y)=f(x,\beta(y))$$)$, then we call $(L,f,\alpha,\beta)$ a quadratic $3$-Bihom-Lie algebra.

Let $(L^{'},[\cdot,\cdot,\cdot]',\alpha',\beta')$ be another $3$-Bihom-Lie algebra. Two quadratic $3$-Bihom-Lie algebras $(L,f,\alpha,\beta)$ and
$(L^{'},f',\alpha',\beta')$ are said to be isometric if there exists a algebra isomorphism $\phi: L\rightarrow L^{'}$ such that
$f(x, y)=f'(\phi(x), \phi(y)),~ \forall\, x, y\in L$.
\end{defn}

\begin{thm}
Let $(L,[\cdot,\cdot,\cdot],\alpha,\beta)$ be a $3$-Bihom-Lie algebra and $(M, \rho, \alpha_M, \beta_M)$ be a representation of $L$. Let us consider $M^*$ the dual space of $M$ and $\tilde\alpha_M,\tilde\beta_M:M^*\rightarrow M^*$ two homomorphisms defined by $
\tilde\alpha_M(f)=f\circ\alpha_M,\tilde\beta_M(f)=f\circ\beta_M,~ \forall\, f\in M^*$. Then the skewsymmetry linear map $\tilde\rho:L\times L\rightarrow \mathrm{End}(M^{*})$, defined by $\tilde\rho(x,y)(f)=- f\circ \rho(x,y),~ \forall\, f \in M^*, x, y \in L,$ is a representation of $L$ on $(M^{*},\tilde\rho,\tilde{\alpha}_M,\tilde\beta_M)$ if and only if for every $x,y,u,v\in L$,
\begin{enumerate}[(1)]
\item $\alpha_M\circ \rho(\alpha(x),\alpha(y))=\rho(x,y)\circ \alpha_M,$
\item $\beta_M\circ \rho(\beta(x),\beta(y))=\rho(x,y)\circ \beta_M,$
\item $\quad\rho(x,y)\rho(\alpha\beta(u),\alpha\beta(v))\\
    =\rho(\alpha(u),\alpha(v))\rho(\beta(u),\beta(v))\!-\!\beta_M\rho([\beta(u),\beta(v),x],\beta(y))\!-\!\beta_M\rho(\beta(x),[\beta(u),\beta(v),y]),$
\item $\quad\beta_M\rho([\beta(u),\beta(v),x],\beta(y))\\
    =-\rho(\alpha(u),y)\rho(\alpha\beta(v),\beta(x))\!-\!\rho(\alpha(v),y)\rho(\beta(x),\alpha\beta(u))\!-\!\rho(x,y)\rho(\alpha\beta(u),\alpha\beta(v)),$
\end{enumerate}
\end{thm}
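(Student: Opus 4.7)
The plan is to show that each of the four axioms required for $(M^*,\tilde\rho,\tilde\alpha_M,\tilde\beta_M)$ to be a representation of $L$ dualizes, under the pairing between $M$ and $M^*$, to exactly one of the four conditions (1)--(4) listed in the theorem. The ancillary hypotheses are immediate: $\tilde\alpha_M\circ\tilde\beta_M=\tilde\beta_M\circ\tilde\alpha_M$ follows from $\alpha_M\beta_M=\beta_M\alpha_M$, and the skew-symmetry $\tilde\rho(x,y)=-\tilde\rho(y,x)$ follows from that of $\rho$ together with the minus sign built into the definition of $\tilde\rho$. The core observation driving everything else is that two operators $A,B\in\mathrm{End}(M)$ are equal if and only if $f\circ A=f\circ B$ for every $f\in M^*$, so each axiom in $\mathrm{End}(M^*)$ can be rewritten as an equation of operators in $\mathrm{End}(M)$.

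For the two linear axioms I would substitute the definitions and apply both sides to an arbitrary $f\in M^*$. For axiom (1) of a representation one computes
\[
\tilde\rho(\alpha(u),\alpha(v))\bigl(\tilde\alpha_M(f)\bigr)=-f\circ\alpha_M\circ\rho(\alpha(u),\alpha(v)),\qquad
\tilde\alpha_M\bigl(\tilde\rho(u,v)(f)\bigr)=-f\circ\rho(u,v)\circ\alpha_M.
\]
Cancelling the common sign and stripping $f$ gives exactly condition (1) of the theorem. Axiom (2) dualizes identically with $\beta$ replacing $\alpha$, yielding condition (2).

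For the quadratic axioms (3) and (4) the same procedure applies, but one must track two phenomena: dualization reverses the order of composition, and when one $\tilde\rho$ is composed with another the two minus signs cancel, whereas a composition of $\tilde\rho$ with $\tilde\alpha_M$ or $\tilde\beta_M$ contributes a single surviving minus. Concretely,
\[
\tilde\rho(x,y)\circ\tilde\rho(\alpha\beta(u),\alpha\beta(v))(f)=f\circ\rho(\alpha\beta(u),\alpha\beta(v))\circ\rho(x,y),
\]
while terms like $\tilde\rho([\beta(u),\beta(v),x],\beta(y))\circ\tilde\beta_M(f)$ become $-f\circ\beta_M\circ\rho([\beta(u),\beta(v),x],\beta(y))$. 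Reassembling axiom (3) of a representation applied to $\tilde\rho$ and stripping $f$ produces precisely condition (3); doing the same with axiom (4) produces condition (4). The main obstacle is purely bookkeeping: keeping the reversal of composition order and the propagation of signs aligned with the asymmetry of the listed conditions, which read almost like the original representation axioms but with several compositions transposed and several signs flipped. Once this dictionary is laid out once and for all, each equivalence is routine.
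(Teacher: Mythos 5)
Your proposal is correct and takes essentially the same route as the paper: dualize each axiom for $\tilde\rho$ on $M^{*}$, note that stripping an arbitrary $f\in M^{*}$ reverses the order of composition and that two $\tilde\rho$'s compose with cancelling signs while $\tilde\rho$ composed with $\tilde\beta_M$ keeps one minus, and read off conditions (1)--(4). The only (cosmetic) caveat is that your illustrative display computes $\tilde\rho(x,y)\circ\tilde\rho(\alpha\beta(u),\alpha\beta(v))$ rather than the composite $\tilde\rho(\alpha\beta(u),\alpha\beta(v))\circ\tilde\rho(x,y)$ occurring in the representation axiom; it is the latter that strips to the left-hand side $\rho(x,y)\rho(\alpha\beta(u),\alpha\beta(v))$ of condition (3), exactly as in the paper's computation.
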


\begin{proof}Let $f \in M^*$, $x,y,u,v \in L$.
First, we have
$$
(\tilde\rho(\alpha(u),\alpha(v))\circ \tilde{\alpha}_M)(f)=- \tilde{\alpha}_M(f)\circ \rho(\alpha(u),\alpha(v))=-f\circ \alpha_M\circ \rho(\alpha(u),\alpha(v))$$ and
$\tilde{\alpha}_M\circ\tilde\rho(u,v)(f)=-\tilde{\alpha}_M(f\circ\rho(u,v))=-f\circ\rho(u,v)\circ\alpha_M,$ which implies $$\tilde\rho(\alpha(u),\alpha(v))\circ \tilde{\alpha}_M=\tilde{\alpha}_M\circ\tilde\rho(u,v)\Leftrightarrow \alpha_M\circ \rho(\alpha(u),\alpha(v))=\rho(u,v)\circ\alpha_M.$$
Similarly, $\tilde\rho(\beta(u),\beta(v))\circ \tilde{\beta}_M=\tilde{\beta}_M\circ\tilde\rho(u,v)\Leftrightarrow \beta_M\circ \rho(\beta(u),\beta(v))=\rho(u,v)\circ\beta_M.$

Then we can get
$$
\tilde\rho(\alpha\beta(u),\alpha\beta(v))\circ\tilde\rho(x,y)(f)=-\tilde\rho(\alpha\beta(u),\alpha\beta(v))(f\rho(x,y))=f\rho(x,y)\rho(\alpha\beta(u),\alpha\beta(v))
$$ and
\begin{eqnarray*}
&&\big(\tilde\rho(\beta(x),\beta(y))\circ\tilde\rho(\alpha(u),\alpha(v))+\tilde\rho([\beta(u),\beta(v),x],\beta(y))\circ\tilde\beta_M\\
&&\,+\tilde\rho(\beta(x),[\beta(u),\beta(v),y])\circ\tilde\beta_M\big)(f)\\
&=&\!f\rho(\alpha(u),\alpha(v)\!)\!\rho(\beta(x),\beta(y)\!)\!\!-\!f\beta_M\rho([\beta(u),\beta(v),x],\beta(y)\!)\!\!-\!f\beta_M\rho(\beta(x),[\beta(u),\beta(v),y]),
\end{eqnarray*}
which implies
\begin{eqnarray*}
&&\tilde\rho(\alpha\beta(u),\alpha\beta(v))\circ\tilde\rho(x,y)\\
&=&\tilde\rho(\beta(x),\!\beta(y)\!)\!\circ\!\tilde\rho(\alpha(u),\!\alpha(v)\!)\!+\!\tilde\rho([\beta(u),\!\beta(v),\!x],\!\beta(y)\!)\!\circ\!\tilde\beta_M
\!+\!\tilde\rho(\beta(x),[\beta(u),\beta(v),y])\!\circ\!\tilde\beta_M
\end{eqnarray*}
if and only if
\begin{eqnarray*}
&&\rho(x,y)\rho(\alpha\beta(u),\alpha\beta(v))\\
&=&\rho(\alpha(u),\alpha(v))\rho(\beta(u),\beta(v))\!-\!\beta_M\rho([\beta(u),\beta(v),x],\beta(y))\!-\!\beta_M\rho(\beta(x),[\beta(u),\beta(v),y]).
\end{eqnarray*}
In the same way,
\begin{eqnarray*}
&&\tilde\rho([\beta(u),\beta(v),x],\beta(y))\tilde\beta_M\\
&=&\tilde\rho(\alpha\beta(v),\beta(x)\!)\!\circ\!\tilde\rho(\alpha(u),y\!)\!\!+\!\tilde\rho(\beta(x),\alpha\beta(u)\!)\!\circ\!\tilde\rho(\alpha(v),y)
\!+\!\tilde\rho(\alpha\beta(u),\alpha\beta(v))\!\circ\!\tilde\rho(x,y)
\end{eqnarray*}
if and only if
\begin{eqnarray*}
&&\beta_M\rho([\beta(u),\beta(v),x],\beta(y))\\
&=&\!-\!\rho(\alpha(u),y)\rho(\alpha\beta(v),\beta(x))\!-\!\rho(\alpha(v),y)\rho(\beta(x),\alpha\beta(u))\!-\!\rho(x,y)\rho(\alpha\beta(u),\alpha\beta(v)).
\end{eqnarray*}

That shows the theorem holds.
\end{proof}

\begin{cor}
Let $\ad$ be the adjoint representation of a $3$-Bihom-Lie algebra $(L,[\cdot,\cdot,\cdot],\\
\alpha,\beta)$. Let us consider the bilinear map $\ad^*:L\times L\rightarrow \mathrm{End}(L^{*})$ defined by $$\ad^*(x,y)(f)=-f\circ \ad(x,y),~\forall\, x,y\in L.$$ Then $\ad^*$ is a representation of $L$ on $(L^{*},\ad^*,\tilde{\alpha},\tilde\beta)$ if and only if
\begin{enumerate}[(1)]
\item $\alpha\circ \ad(\alpha(x),\alpha(y))=\ad(x,y)\circ \alpha,$
\item $\beta\circ \ad(\beta(x),\beta(y))=\ad(x,y)\circ \beta,$
\item $\quad\ad(x,y)\ad(\alpha\beta(u),\alpha\beta(v))\\
    =\ad(\alpha(u),\alpha(v))\ad(\beta(u),\beta(v))\!-\!\beta\ad([\beta(u),\beta(v),x],\beta(y))\!-\!\beta\ad(\beta(x),[\beta(u),\beta(v),y]),$
\item $\quad\beta\ad([\beta(u),\beta(v),x],\beta(y))\\
    =\!-\ad(\alpha(u),y)\ad(\alpha\beta(v),\beta(x)\!)-\ad(\alpha(v),y)\ad(\beta(x),\alpha\beta(u)\!)-\ad(x,y)\ad(\alpha\beta(u),\alpha\beta(v)\!),$
\end{enumerate}
We call the representation $\ad^*$ the coadjoint representation of $L$.
\end{cor}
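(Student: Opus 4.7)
The plan is to obtain this corollary as a direct specialization of the preceding theorem, applied to the adjoint representation. Concretely, I would take $(M, \rho, \alpha_M, \beta_M) = (L, \ad, \alpha, \beta)$ in the theorem; under this identification $M^{*} = L^{*}$, the maps $\tilde\alpha_M, \tilde\beta_M$ become $\tilde\alpha, \tilde\beta$, and the dualized bracket $\tilde\rho$ agrees with $\ad^{*}$ by definition, since
\[
\tilde\rho(x,y)(f) = -f\circ\rho(x,y) = -f\circ\ad(x,y) = \ad^{*}(x,y)(f).
\]
Thus the statement that $(L^{*}, \ad^{*}, \tilde\alpha, \tilde\beta)$ is a representation of $L$ is exactly the statement that $(M^{*}, \tilde\rho, \tilde\alpha_M, \tilde\beta_M)$ is a representation, to which the theorem applies.

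Next I would unravel the four equivalent conditions from the theorem under this substitution. Conditions (1) and (2) become $\alpha\circ\ad(\alpha(x),\alpha(y)) = \ad(x,y)\circ\alpha$ and $\beta\circ\ad(\beta(x),\beta(y)) = \ad(x,y)\circ\beta$ verbatim, and conditions (3) and (4) similarly transcribe with every $\rho$ replaced by $\ad$ and every $\beta_M$ replaced by $\beta$. Because the theorem is an \emph{iff} assertion, the corollary follows once we record that the substituted conditions are literally those listed as (1)--(4) in the corollary.

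I would therefore write the proof in one short paragraph: recall that $(L, \ad, \alpha, \beta)$ is a representation (by the earlier proposition giving the adjoint representation), invoke the preceding theorem with $(M, \rho, \alpha_M, \beta_M) = (L, \ad, \alpha, \beta)$, identify $\tilde\rho$ with $\ad^{*}$, and read off the four conditions. No genuine obstacle arises here; the only care needed is to make sure that no additional structural hypothesis from the theorem (for example, compatibilities of $\tilde\alpha_M$ and $\tilde\beta_M$) is lost in the specialization. I expect this to be immediate since $\alpha\beta = \beta\alpha$ on $L$ implies $\tilde\alpha\tilde\beta = \tilde\beta\tilde\alpha$ on $L^{*}$, so the commuting-endomorphism hypothesis on the module side is automatic.
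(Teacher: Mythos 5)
Your proposal is correct and matches the paper's intent exactly: the corollary carries no separate proof in the paper because it is precisely the specialization of the preceding theorem to $(M,\rho,\alpha_M,\beta_M)=(L,\ad,\alpha,\beta)$, with $\tilde\rho=\ad^*$ by definition and the four conditions transcribing verbatim. Your remark that $\alpha\beta=\beta\alpha$ on $L$ gives $\tilde\alpha\tilde\beta=\tilde\beta\tilde\alpha$ on $L^*$ is the right check and completes the argument.
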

Under the above notations, assume that the coadjoint representation $\ad^*$
exists and $\alpha$, $\beta$ are bijective. From Proposition \ref{prop1}, $(L\oplus L^{*},[\cdot,\cdot,\cdot]_\theta,\alpha+\tilde{\alpha},\beta+\tilde{\beta})$ is a $3$-Bihom-Lie algebra by a $3$-cocycle $\theta:L\times L\times L\rightarrow L^*$ associated with $\ad^*$.

\begin{defn}
Let $L$ be a $3$-Bihom-Lie algebra over a field $\mathbb{K}$. We inductively define a derived series
$$(L^{(n)})_{n\geq 0}: L^{(0)}=L,\ L^{(n+1)}=[L^{(n)},L^{(n)},L]$$
and a central descending series
$$(L^{n})_{n\geq 0}: L^{0}=L,\ L^{n+1}=[L^{n},L,L].$$

$L$ is called solvable and nilpotent $($of length $k$$)$ if and only if there is a $($smallest$)$ integer $k$ such that $L^{(k)}=0$ and $L^{k}=0$, respectively.
\end{defn}

\begin{thm}
Let $(L,[\cdot,\cdot,\cdot],\alpha,\beta)$ be a $3$-Bihom-Lie algebra over a field $\mathbb{K}$.
\begin{enumerate}[(1)]
   \item  If $L$ is solvable, then $(L\oplus L^{*},[\cdot,\cdot,\cdot]_\theta,\alpha+\tilde{\alpha},\beta+\tilde{\beta})$ is solvable.
   \item  If $L$ is nilpotent, then $(L\oplus L^{*},[\cdot,\cdot,\cdot]_\theta,\alpha+\tilde{\alpha},\beta+\tilde{\beta})$ is nilpotent.
\end{enumerate}
\end{thm}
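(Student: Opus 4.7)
The plan is to exploit the structure of $T_\theta^*(L)$ as an extension of $L$ by the abelian ideal $L^*$. Inspecting the bracket formula of Proposition \ref{prop1}, the $L$-component of $[u+x, v+y, w+z]_\theta$ is simply $[u,v,w]$, while every other summand lies in $L^*$; moreover, if two of the three entries have vanishing $L$-component, then the whole bracket is zero, since each surviving term involves either $\theta$ or $\ad^*$ evaluated at a zero argument. In particular $[L^*, L^*, T_\theta^*(L)]_\theta = 0$, and the projection $\pi\colon T_\theta^*(L) \to L$ is a surjective morphism of $3$-Bihom-Lie algebras. For part~(1), a straightforward induction via $\pi$ gives $(T_\theta^*(L))^{(k)} \subseteq L^{(k)} \oplus L^*$; if $L^{(n)}=0$ then $(T_\theta^*(L))^{(n)} \subseteq L^*$, and one more bracket lands in $[L^*, L^*, T_\theta^*(L)]_\theta = 0$, proving solvability.

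For part~(2), the same inductive containment yields $(T_\theta^*(L))^k \subseteq L^k \oplus L^*$, so $(T_\theta^*(L))^n \subseteq L^*$ once $L^n = 0$. The delicate point is that one additional bracket does not immediately vanish: for $g \in L^*$ and $v+y, w+z \in T_\theta^*(L)$, the bracket formula collapses to
\[
[g, v+y, w+z]_\theta = \ad^*(v, \alpha^{-1}\beta(w))(\tilde\alpha\tilde\beta^{-1}(g)),
\]
which lies in $\ad^*(L,L)(L^*)$. I then run a secondary induction on $j$ to show $(T_\theta^*(L))^{n+j} \subseteq (\ad^*(L,L))^j(L^*)$; the inductive step requires $\tilde\alpha\tilde\beta^{-1}$ to preserve each subspace $(\ad^*(L,L))^j(L^*)$, which follows from the intertwining identity $\tilde\alpha \circ \ad^*(x,y) = \ad^*(\alpha^{-1}(x), \alpha^{-1}(y)) \circ \tilde\alpha$ and the analogous one for $\tilde\beta$, both valid because $\alpha,\beta$ are algebra automorphisms of $L$.

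The closing step is a dualization. Since $\ad^*(x,y)(f) = -f \circ \ad(x,y)$, iterating gives $(\ad^*(x_1,y_1)\cdots\ad^*(x_j,y_j))(f) = (-1)^j f \circ \ad(x_j,y_j)\cdots\ad(x_1,y_1)$, so $(\ad^*(L,L))^n(L^*) = 0$ provided the operator composition $\ad(x_n,y_n)\cdots\ad(x_1,y_1)$ annihilates $L$ for every choice of entries. A short induction using Bihom-skewsymmetry, which lets one shuffle a nested bracket into the leftmost slot thanks to the bijectivity of $\alpha$ and $\beta$, shows that $[x_n, y_n, [\ldots, [x_1, y_1, u]\ldots]] \in L^n$; when $L^n = 0$ this vanishes, so $(T_\theta^*(L))^{2n} = 0$ and $T_\theta^*(L)$ is nilpotent. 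The principal obstacle throughout the argument is tracking the $\alpha,\beta$-twists that permeate both the $T_\theta^*$-bracket and the coadjoint action: the two intertwining identities above, together with the automorphism-invariance of the subspaces $L^k$, are what make the secondary induction and the final dualization go through cleanly.
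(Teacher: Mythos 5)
Your proof is correct and takes essentially the same route as the paper: in both arguments the derived (resp.\ central descending) series of $L\oplus L^{*}$ is shown by induction to fall into $L^{(k)}+L^{*}$ (resp.\ $L^{k}+L^{*}$), after which further brackets act on $L^{*}$ only through twisted coadjoint operators $\ad^{*}(\cdot,\alpha^{-1}\beta(\cdot))\circ\tilde\alpha\tilde\beta^{-1}$, whose duals are iterated adjoint maps landing in $L^{(s)}$ resp.\ $L^{s}=0$. The only difference is the bookkeeping in part (2): your secondary induction with the intertwining identities yields the bound $(L\oplus L^{*})^{2n}=0$, which is the correct count, whereas the paper applies only $s-1$ further brackets (so its nested expression lies a priori only in $L^{s-1}$, not $L^{s}$), an off-by-one that your version quietly repairs.
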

\begin{proof}
(1) We suppose that $L$ is solvable of length $s$, i.e. $L^{(s)}=[L^{(s-1)},L^{(s-1)},L]=0.$ We claim that $(L\oplus L^*)^{(k)}\subseteq L^{(k)}+L^*$, which we prove by induction on $k$. The case $k=1$, by Proposition \ref{prop1}, we have
\begin{eqnarray*}
(L\oplus L^*)^{(1)}&=&[L\oplus L^*,L\oplus L^*,L\oplus L^*]_\theta\\
&=&[L,L,L]_\theta+[L,L,L^*]_\theta+[L,L^*,L]_\theta+[L^*,L,L]_\theta\\
&=&[L,L,L]+\theta(L,L,L)+[L,L,L^*]_\theta+[L,L^*,L]_\theta+[L^*,L,L]_\theta\\
&\subseteq&L^{(1)}+L^*.
\end{eqnarray*}
By induction, $(L\oplus L^*)^{(k-1)}\subseteq L^{(k-1)}+L^*$. So
\begin{eqnarray*}
&&(L\oplus L^*)^{(k)}\\
&=&[(L\oplus L^*)^{(k-1)},(L\oplus L^*)^{(k-1)},L\oplus L^*]_\theta\\
&\subseteq&[L^{(k-1)}+L^*,L^{(k-1)}+L^*,L\oplus L^*]_\theta\\
&=&[L^{(k-1)},L^{(k-1)},L]+\theta(L^{(k-1)},L^{(k-1)},L)+[L^{(k-1)},L^{(k-1)},L^*]_\theta+[L^{(k-1)},L^*,L]_\theta\\
&&+[L^*,L^{(k-1)},L]_\theta\\
&\subseteq&L^{(k)}+L^*.
\end{eqnarray*}
Therefore
\begin{eqnarray*}
&&(L\oplus L^*)^{(s+1)}\\
&\subseteq&[L^{(s)},L^{(s)},L]+\theta(L^{(s)},L^{(s)},L)+[L^{(s)},L^{(s)},L^*]_\theta+[L^{(s)},L^*,L]_\theta+[L^*,L^{(s)},L]_\theta\\
&=&0.
\end{eqnarray*}
It follows $(L\oplus L^{*},[\cdot,\cdot,\cdot]_\theta,\alpha+\tilde{\alpha},\beta+\tilde{\beta})$ is solvable.

(2) Suppose that $L$ is nilpotent of length $s$. Since $(L\oplus L^{*})^{s}/L^{*}\cong L^{s}$ and $L^{s}=0$, we have
$(L\oplus L^{*})^{s}\subseteq L^{*}$. Let $h\in(L\oplus L^{*})^{s}\subseteq L^{*},~ b\in L,~x_{i}+f_{i},~ y_{i}+g_{i}\in L\oplus L^{*}, ~1\leq i\leq s-1$, we have
\begin{eqnarray*}
&&[[\cdots[h,x_1+f_{1},y_{1}+g_{1}]_\theta,\cdots]_\theta,x_{s-1}+f_{s-1},y_{s-1}+g_{s-1}]_\theta(b)\\
&=&(-1)^{s-1}h\alpha\beta^{-1}\ad(x_1,\!\alpha^{-1}\beta(y_1)\!)\alpha\beta^{-1}\ad(x_2,\!\alpha^{-1}\beta(y_2)\!)\!\cdots\!\alpha\beta^{-1}\ad(x_{s-1},\!\alpha^{-1}\beta(y_{s-1})\!)\!(b)\\
&=&(-1)^{s-1}h\alpha\beta^{-1}([x_1,\alpha^{-1}\beta(y_1),\alpha^{-1}\beta[x_2,\alpha^{-1}\beta(y_2),\!\cdots\!,\alpha\beta^{-1}[x_{s-1},\alpha^{-1}\beta(y_{s-1}),b]\cdots]])\\
&\in &h(L^{s})=0.
\end{eqnarray*}
Thus $(L\oplus L^{*},[\cdot,\cdot,\cdot]_\theta,\alpha+\tilde{\alpha},\beta+\tilde{\beta})$ is nilpotent.
\end{proof}

Now we consider the following symmetric bilinear form $q_{L}$ on $L\oplus L^{*}$,
$$q_{L}(x+f,y+g)=f(y)+g(x),~ \forall\, x+f, y+g\in L\oplus L^*.$$
Obviously, $q_{L}$ is  nondegenerate. In fact, if $x+f$ is orthogonal to  all elements $y+g$ of $L\oplus L^{*}$, then $f(y)=0$ and $g(x)=0$,  which implies that $x=0$ and $f=0$.

\begin{lem}\label{lemma3.2}
Let $q_L$ be as above. Then the 4-tuple $(L\oplus L^{*},q_L,\alpha+\tilde{\alpha},\beta+\tilde{\beta})$ is a quadratic $3$-Bihom-Lie algebra if and only if $\theta$ satisfies for all $ x_1,x_2,x_3,x_4\in L$,
\begin{eqnarray*}\label{203}
\theta(\beta(x_1), \beta(x_2), \alpha(x_3))(\alpha(x_4))+\theta(\beta(x_1),\beta(x_3),\alpha(x_3))(\alpha(x_4))=0.
\end{eqnarray*}
\end{lem}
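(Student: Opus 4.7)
The plan is to verify, one at a time, the four ingredients in the definition of a quadratic $3$-Bihom-Lie algebra on $(L\oplus L^{*}, q_L, \alpha+\tilde\alpha, \beta+\tilde\beta)$. Nondegeneracy of $q_L$ has been recorded in the paragraph immediately preceding the lemma, and symmetry is immediate from $f(y)+g(x)=g(x)+f(y)$. Thus two of the four defining conditions require no work, and the statement reduces to the $\alpha\beta$-invariance and the $q_L$-symmetry of $\alpha+\tilde\alpha$ and $\beta+\tilde\beta$.

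The $q_L$-symmetry of the structure maps is dispatched by a direct inspection. Using $\tilde\alpha(f)=f\circ\alpha$ and $\tilde\beta(f)=f\circ\beta$ from the previous theorem, one gets
\[
q_L\bigl((\alpha+\tilde\alpha)(x+f),\,y+g\bigr)=f(\alpha(y))+g(\alpha(x))=q_L\bigl(x+f,\,(\alpha+\tilde\alpha)(y+g)\bigr),
\]
and the analogous identity for $\beta+\tilde\beta$. This step is essentially free and pins the whole content of the lemma on the $\alpha\beta$-invariance of $q_L$.

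For the main condition I would test
\[
q_L\bigl([\beta(\xi_1),\beta(\xi_2),\alpha(\xi_3)]_\theta,\,\alpha(\xi_4)\bigr)+q_L\bigl(\alpha(\xi_3),\,[\beta(\xi_1),\beta(\xi_2),\alpha(\xi_4)]_\theta\bigr)=0
\]
with $\xi_i=u_i+f_i\in L\oplus L^{*}$. Expanding the triple bracket via the formula from Proposition \ref{prop1} applied to $\ad^{*}$ (and using $\alpha^{-1}\beta\alpha=\beta$ and $\tilde\alpha\tilde\beta^{-1}\tilde\beta=\tilde\alpha$ to clear the twists), the bracket $[\beta(\xi_1),\beta(\xi_2),\alpha(\xi_3)]_\theta$ decomposes into five summands: the ambient bracket $[\beta(u_1),\beta(u_2),\alpha(u_3)]$, the cocycle $\theta(\beta(u_1),\beta(u_2),\alpha(u_3))$, and three $\ad^{*}$ contributions that are linear in $f_3$, $f_2$ and $f_1$ respectively. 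Pairing under $q_L$ with $\alpha(\xi_4)$ and applying $\ad^{*}(x,y)(f)(z)=-f([x,y,z])$ produces, on each side, one contribution linear in each of $f_1,f_2,f_3,f_4$ together with one purely $\theta$-valued scalar. Using the Bihom-skewsymmetry $[\beta(x),\beta(y),\alpha(z)]=-[\beta(x),\beta(z),\alpha(y)]$ to interchange the roles of $u_3$ and $u_4$ in the inner brackets, the $f_i$-linear scalars match pairwise between the two sides and cancel. What is left is a single pair of pure $\theta$-terms, and forcing their sum to vanish for all $u_1,u_2,u_3,u_4$ is exactly the displayed condition, giving both implications of the iff.

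The main obstacle will be the careful bookkeeping of signs among the eight $f_i$-dependent scalars: each of the three $\ad^{*}$ summands in the triple bracket carries a prescribed sign, the pairing under $q_L$ introduces an extra minus through $\ad^{*}(x,y)(f)(z)=-f([x,y,z])$, and each Bihom-skewsymmetric swap flips a further sign. Tracking these three layers of signs correctly is the only subtlety; once the $f_i$-terms are confirmed to cancel in pairs, the equivalence with the stated $\theta$-condition is immediate.
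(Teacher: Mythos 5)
Your proposal is correct and follows essentially the same route as the paper: the $q_L$-symmetry of $\alpha+\tilde\alpha$ and $\beta+\tilde\beta$ is checked directly, and the $\alpha\beta$-invariance is reduced, after expanding the $[\cdot,\cdot,\cdot]_\theta$-bracket into the ambient bracket, the cocycle term and the three $\ad^*$ contributions and cancelling all $f_i$-linear scalars via $\ad^*(x,y)(f)(z)=-f([x,y,z])$ and Bihom-skewsymmetry, to the vanishing of $\theta(\beta(x_1),\beta(x_2),\alpha(x_3))(\alpha(x_4))+\theta(\beta(x_1),\beta(x_2),\alpha(x_4))(\alpha(x_3))$, which is exactly the paper's computation (and shows the index in the lemma's displayed condition is a typo).
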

\begin{proof}

Now suppose that  $x_i+f_i\in L\oplus L^*, i=1,2,3,4$, we have
\begin{eqnarray*}
q_{L}((\alpha+\tilde{\alpha})(x_1+f_1),x_2+f_2)&=&q_L(\alpha(x_1)+f_1\circ\alpha,x_2+f_2)\\
&=&f_2\circ\alpha(x_1)+f_1(\alpha(x_2))\\
&=&q_L(x_1+f_1,(\alpha+\tilde{\alpha})(x_2+f_2)).
\end{eqnarray*}
Then $\alpha+\tilde{\alpha}$ is $q_L$-symmetric. In the same way, $\beta+\tilde{\beta}$ is $q_L$-symmetric.

Next, we can obtain
\begin{eqnarray*}
&&q_{L}\big([(\beta+\tilde{\beta})(x_1+f_1),(\beta+\tilde{\beta})(x_2+f_2), (\alpha+\tilde{\alpha})(x_3+f_3)]_\theta, (\alpha+\tilde{\alpha})(x_4+f_4)\big)\\
&&+q_{L}\big((\alpha+\tilde{\alpha})(x_3+f_3),[(\beta+\tilde{\beta})(x_1+f_1), (\beta+\tilde{\beta})(x_2+f_2), (\alpha+\tilde{\alpha})(x_4+f_4)]_\theta\big)\\
&=&q_{L}\big([\beta(x_1)+f_1\circ\beta, \beta(x_2)+f_2\circ\beta, \alpha(x_3)+f_3\circ\alpha]_\theta, \alpha(x_4)+f_4\circ\alpha\big)\\
&&+q_{L}\big(\alpha(x_3)+f_3\circ\alpha, [\beta(x_1)+f_1\circ\beta, \beta(x_2)+f_2\circ\beta, \alpha(x_4)+f_4\circ\alpha]_\theta\big)\\
&=&q_{L}\big([\beta(x_1),\beta(x_2),\alpha(x_3)]+\theta(\beta(x_1),\beta(x_2),\alpha(x_3))+\ad^*(\beta(x_1),\beta(x_2))(f_3\circ\alpha)\\
&&-\ad^*(\beta(x_1),\alpha^{-1}\beta\alpha(x_3))\tilde{\alpha}\tilde{\beta}^{-1}(f_2\circ\beta)+\ad^*(\beta(x_2),\alpha^{-1}\beta\alpha(x_3))\tilde{\alpha}\tilde{\beta}^{-1}(f_1\circ\beta),\\
&&\alpha(x_4)+f_4\circ\alpha\big)+q_{L}\big(\alpha(x_3)+f_3\circ\alpha,[\beta(x_1),\beta(x_2),\alpha(x_4)]+\theta(\beta(x_1),\beta(x_2),\alpha(x_4))\\
&&+\ad^*(\beta(x_1),\beta(x_2))(f_4\circ\alpha)-\ad^*(\beta(x_1),\alpha^{-1}\beta\alpha(x_4))\tilde{\alpha}\tilde{\beta}^{-1}(f_2\circ\beta)\\
&&+\ad^*(\beta(x_2),\alpha^{-1}\beta\alpha(x_4))\tilde{\alpha}\tilde{\beta}^{-1}(f_1\circ\beta)\big)\\
&=&\theta(\beta(x_1),\beta(x_2),\alpha(x_3)\!)(\alpha(x_4)\!)\!-\!f_3\alpha([\beta(x_1),\beta(x_2),\alpha(x_4)])\!+\!f_2\alpha([\beta(x_1),\beta(x_3),\alpha(x_4)])\\
&&\!-\!f_1\alpha([\beta(x_2),\beta(x_3),\alpha(x_4)])\!+\!f_4\alpha([\beta(x_1),\beta(x_2),\alpha(x_3)])\!+\!\theta(\beta(x_1),\beta(x_2),\alpha(x_4)\!)\!(\alpha(x_3)\!)\!\\
&&-f_4\alpha([\beta(x_1),\beta(x_2),\alpha(x_3)])+f_2\alpha([\beta(x_1),\beta(x_4),\alpha(x_3)])-f_1\alpha([\beta(x_2),\beta(x_4),\alpha(x_3)])\\
&&+f_3\alpha([\beta(x_1),\beta(x_2),\alpha(x_4)])\\
&=&\theta(\beta(x_1),\beta(x_2),\alpha(x_3))(\alpha(x_4))+\theta(\beta(x_1),\beta(x_2),\alpha(x_4))(\alpha(x_3)),
\end{eqnarray*}
which implies
\begin{eqnarray*}
&&q_{L}\big([(\beta+\tilde{\beta})(x_1+f_1),(\beta+\tilde{\beta})(x_2+f_2), (\alpha+\tilde{\alpha})(x_3+f_3)]_\theta, (\alpha+\tilde{\alpha})(x_4+f_4)\big)\\
&+&\!\!\!\!q_{L}\big((\alpha+\tilde{\alpha})(x_3+f_3),[(\beta+\tilde{\beta})(x_1+f_1), (\beta+\tilde{\beta})(x_2+f_2), (\alpha+\tilde{\alpha})(x_4+f_4)]_\theta\big)=0
\end{eqnarray*}
if and only if
$\theta(\beta(x_1),\beta(x_2),\alpha(x_3))(\alpha(x_4))+\theta(\beta(x_1),\beta(x_2),\alpha(x_4))(\alpha(x_3))
=0$.

Hence the lemma follows.
\end{proof}

Now, we shall call the quadratic $3$-Bihom-Lie algebra $(L\oplus L^{*},q_L,\alpha+\tilde{\alpha},\beta+\tilde{\beta})$ the $T^*_\theta$-extension of $L$ (by $\theta$) and denote by $T_\theta^*(L)$.

\begin{lem}\label{lemma3.1}
Let $(L,q_L,\alpha,\beta)$ be a $2n$-dimensional quadratic $3$-Bihom-Lie algebra over a field $\mathbb{K}$ $(\ch\mathbb{K}\neq2)$, $\alpha$ be surjective and $I$ be an isotropic $n$-dimensional subspace of $L$. If $I$ is a Bihom-ideal of $(L,[\cdot,\cdot,\cdot],\alpha,\beta)$, then $[\beta(I),\beta(L),\alpha(I)]=0$.
\end{lem}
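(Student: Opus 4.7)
The plan is to use the nondegenerate $\alpha\beta$-invariant bilinear form $q_L$ to convert the vanishing of $[\beta(I),\beta(L),\alpha(I)]$ into a statement about $q_L$-pairings, and then exploit the isotropy of $I$ together with its ideal property.

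First I would fix arbitrary elements $a,c \in I$ and $b \in L$, and aim to show that $[\beta(a),\beta(b),\alpha(c)]$ is orthogonal to every element of $L$ under $q_L$. Since $\alpha$ is surjective, every element of $L$ has the form $\alpha(x)$ for some $x \in L$, so it suffices to check
\[
q_L\bigl([\beta(a),\beta(b),\alpha(c)],\alpha(x)\bigr) = 0 \qquad \forall\, x \in L.
\]

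Next I would apply the $\alpha\beta$-invariance of $q_L$ to rewrite this quantity as
\[
-\,q_L\bigl(\alpha(c),\,[\beta(a),\beta(b),\alpha(x)]\bigr).
\]
Here the ideal property of $I$ enters: since $a \in I$ and $I$ is a Bihom-ideal, we have $\beta(a) \in I$, and then $[\beta(a),\beta(b),\alpha(x)] \in I$. Combined with $\alpha(c) \in I$ (again by the ideal property), this places both arguments of $q_L$ inside $I$. Because $I$ is isotropic, $q_L$ vanishes on $I \times I$, so the entire expression is zero.

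Having shown $[\beta(a),\beta(b),\alpha(c)] \in L^\perp$, the nondegeneracy of $q_L$ forces $[\beta(a),\beta(b),\alpha(c)] = 0$, proving the lemma. I do not foresee any real obstacle here: the dimension hypothesis and the assumption $\ch\mathbb{K}\neq 2$ are not actually needed for this step (they will matter in the subsequent isomorphism theorem), and the argument proceeds by a single application of $\alpha\beta$-invariance. The only care needed is in the bookkeeping to verify that the ideal property of $I$ does place $[\beta(a),\beta(b),\alpha(x)]$ inside $I$; this follows immediately from the definition of Bihom-ideal stated earlier, which requires $[x,y,z] \in \eta$ for $x \in \eta$ and $y,z \in L$.
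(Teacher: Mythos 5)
Your proposal is correct and follows essentially the same route as the paper: apply the $\alpha\beta$-invariance of $q_L$ to move the bracket onto the second argument, use the Bihom-ideal property together with isotropy of $I$ to kill the pairing, and conclude by nondegeneracy and surjectivity of $\alpha$. The only cosmetic difference is that the paper first upgrades $I\subseteq I^\bot$ to $I=I^\bot$ via the dimension count, whereas you correctly observe that isotropy alone suffices for this lemma.
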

\begin{proof}
Since dim$I$+dim$I^{\bot}=n+\dim I^{\bot}=2n$ and $I\subseteq I^{\bot}$, we have $I=I^{\bot}$.
If $I$ is a Bihom-ideal of $(L,[\cdot,\cdot],\alpha,\beta)$, then \begin{eqnarray*}
q_L([\beta(I),\beta(L),\alpha(I^{\bot})],\alpha(L))&=&-q_L(\alpha(I^{\bot}),[\beta(I),\beta(L),\alpha(L)])\\
&\subseteq&q_L(\alpha(I^{\bot}),[I,\beta(L),\alpha(L)])\\
&\subseteq&q_L(I^{\bot},I)=0,
\end{eqnarray*}
which implies $[\beta(I),\beta(L),\alpha(I)]=[\beta(I),\beta(L),\alpha(I^{\bot})]\subseteq \alpha(L)^{\bot}=L^{\bot}=0$.
\end{proof}

\begin{thm}
Let $(L,q_L,\alpha,\beta)$ be a quadratic regular $3$-Bihom-Lie algebra of dimensional $2n$ over a field $\mathbb{K}$ $(\ch\mathbb{K}\neq2)$. Then $(L,q_L,\alpha,\beta)$ is isometric to a $T^{*}_\theta$-extension $(T_{\theta}^{*}(B),q_{B},\alpha^{'},\beta^{'})$ if and only if $(L,[\cdot,\cdot,\cdot],\alpha,\beta)$ contains an isotropic Bihom-ideal $I$ of dimension $n$. In particular, $B\cong L/I$.
\end{thm}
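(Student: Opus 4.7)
My plan breaks into the two directions. For \emph{necessity}, supposing $\psi:L\to T_\theta^*(B)$ is an isometric isomorphism, I set $I:=\psi^{-1}(B^*)$. Reading off the formula for $[\cdot,\cdot,\cdot]_\theta$ in Proposition~\ref{prop1}, the bracket of any three elements of $B\oplus B^*$ in which at least one argument comes from $B^*$ lies in $B^*$, while $\alpha+\tilde\alpha$ and $\beta+\tilde\beta$ stabilize $B^*$, so $B^*$ is a Bihom ideal of $T_\theta^*(B)$; it has dimension $n$; and $q_B(f,g)=f(0)+g(0)=0$ on $B^*\times B^*$, so it is isotropic. Transporting these three properties along $\psi^{-1}$ produces the desired $I$, and $B\cong T_\theta^*(B)/B^*\cong L/I$.

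For \emph{sufficiency}, let $I$ be an isotropic Bihom ideal with $\dim I=n$. The dimension identity $\dim I+\dim I^\perp=2n$ combined with $I\subseteq I^\perp$ forces $I=I^\perp$, so that $q_L$ induces a perfect pairing between $I$ and $L/I$. The key geometric step is to produce a vector-space complement $B_0\subset L$ of $I$ which is simultaneously isotropic for $q_L$ and stable under both $\alpha$ and $\beta$; such a Lagrangian complement exists thanks to $\ch\mathbb{K}\neq 2$ combined with the regularity of $\alpha,\beta$ and the $\alpha\beta$-invariance of $q_L$. Setting $B:=L/I$ with induced structure maps $\bar\alpha,\bar\beta$, I then identify $I$ with $B^*$ via $\varphi:I\to B^*$, $\varphi(i)(\bar x):=q_L(i,x)$, which is well-defined (since $q_L(i,I)=0$) and bijective (by nondegeneracy of $q_L$); the $\alpha,\beta$-symmetry of $q_L$ yields $\varphi\circ\alpha|_I=\tilde{\bar\alpha}\circ\varphi$ and $\varphi\circ\beta|_I=\tilde{\bar\beta}\circ\varphi$.

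For $x,y,z\in B_0$ I split $[x,y,z]=[x,y,z]^{B_0}+\theta'(x,y,z)$ with $\theta'(x,y,z)\in I$ and define $\theta(\bar x,\bar y,\bar z):=\varphi(\theta'(x,y,z))$. Projecting the 3-Bihom-Jacobi identity of $L$ along $L=B_0\oplus I$ yields the 3-Bihom-Jacobi identity on $B$ from the $B_0$-component and, after applying $\varphi$, the $3$-cocycle condition for $\theta$ with respect to $\ad^*$ from the $I$-component; Bihom-skewsymmetry of $\theta$ is inherited from that of $[\cdot,\cdot,\cdot]$, and the extra symmetry demanded by Lemma~\ref{lemma3.2} is the direct translation of the $\alpha\beta$-invariance of $q_L$. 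Finally I define $\psi:L\to T_\theta^*(B)$ by $\psi(b+i):=\bar b+\varphi(i)$ for $b\in B_0,i\in I$, and verify: $\psi$ commutes with the structure maps thanks to the $\alpha,\beta$-stability of $B_0$ together with the intertwining property of $\varphi$; $\psi$ preserves brackets by construction of $\theta$ and the formula for $[\cdot,\cdot,\cdot]_\theta$ from Proposition~\ref{prop1}; and $\psi$ is an isometry because $B_0$ and $I$ are both isotropic while $\varphi$ realizes $q_L|_{I\times B_0}$ as the canonical pairing $B^*\times B\to\mathbb{K}$. The principal obstacle I foresee is the Witt-type construction of the $\alpha,\beta$-invariant Lagrangian complement $B_0$; once that is in hand, the remaining verifications are long but mechanical, driven by Proposition~\ref{prop1}, Lemma~\ref{lemma3.2}, and the identifications above.
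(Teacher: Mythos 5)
Your necessity half and the overall architecture of your sufficiency half coincide with the paper's proof (using $I=I^{\perp}$, the identification $q_L^{*}:I\to B^{*}$ given by $q_L^{*}(i)(\bar x)=q_L(i,x)$, the cocycle $\theta$ defined as the $I$-component of the bracket of $B_0$-representatives, and the map $\psi(b+i)=\bar b+q_L^{*}(i)$). The genuine gap is exactly the ``key geometric step'' you flag yourself: you require the isotropic complement $B_0$ to be stable under $\alpha$ and $\beta$, and you assert that such a Lagrangian complement exists because of $\ch\mathbb{K}\neq 2$, the regularity of $\alpha,\beta$ and the $\alpha\beta$-invariance of $q_L$. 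That existence claim is false in general: being $q_L$-symmetric does not make $\alpha$ semisimple, so an $\alpha$-invariant complement to the $\alpha$-invariant Lagrangian $I$ need not exist. Concretely, take $L=\mathbb{K}e_1\oplus\mathbb{K}e_2$ with the zero triple bracket (so all axioms hold trivially), $q_L(e_1,e_1)=q_L(e_2,e_2)=0$, $q_L(e_1,e_2)=1$, and $\alpha=\beta$ defined by $\alpha(e_1)=e_1$, $\alpha(e_2)=e_1+e_2$; this is a quadratic regular $3$-Bihom-Lie algebra of dimension $2n=2$, and $I=\mathbb{K}e_1$ is an $n$-dimensional isotropic Bihom-ideal, but the only $\alpha$-invariant line is $I$ itself, so no $\alpha$-stable complement exists at all, let alone an isotropic one. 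Since your verification that $\psi$ intertwines the structure maps (and your treatment of $\theta$) explicitly invokes the $\alpha,\beta$-stability of $B_0$, the sufficiency argument as proposed cannot be carried out.

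For comparison, the paper asks only for an isotropic complement $B_0$ (which does exist by the standard Witt-type argument, since $I=I^{\perp}$ and $\ch\mathbb{K}\neq 2$) and obtains the compatibility on the $B^{*}$-side from the $f$-symmetry of $\alpha,\beta$, via $q_L^{*}(\alpha(i))=q_L^{*}(i)\circ\bar\alpha$. You are right that stability of $B_0$ is nevertheless lurking: in the paper's computations of $\varphi\alpha=(\bar\alpha+\tilde{\bar\alpha})\varphi$ and of bracket preservation, the components $p_1(\alpha(x))$ and $p_1(\beta(x))$ for $x\in B_0$ are silently dropped, i.e. the formulas are written as if $\alpha(B_0),\beta(B_0)\subseteq B_0$. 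But your proposal does not close this hole; it converts it into an existence statement that is false as stated. To make the sufficiency direction rigorous one must either prove (or hypothesize) conditions guaranteeing an $\alpha,\beta$-invariant isotropic complement, or redo the computations keeping the cross terms $q_L^{*}(p_1(\alpha(x)))$, $q_L^{*}(p_1(\beta(x)))$ and showing how they are absorbed; your outline provides neither.
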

\begin{proof}
($\Longrightarrow$) Suppose $\phi:B\oplus B^*\rightarrow L$ is isometric, we have $\phi(B^*)$ is a $n$-dimensional isotropic Bihom-ideal of $L$. In fact, since $\phi$ is isometric, ${\rm dim}B\oplus B^*={\rm dim}L=2n$, which implies ${\rm dim}B^*={\rm dim}\phi(B^*)=n$. And $0=q_B(B^*,B^*)=q_L(\phi(B^*),\phi(B^*))$, we have $\phi(B^*)\subseteq\phi(B^*)^{\bot}$. By $[\phi(B^*),L,L]=[\phi(B^*),\phi(B\oplus B^*),\phi(B\oplus B^*)]=\phi([B^*,B\oplus B^*,B\oplus B^*]_\theta)\subseteq\phi(B^*)$, $\phi(B^*)$ is a Bihom-ideal of $L$. Furthermore, $B\cong B\oplus B^*/B^*\cong L/\phi(B^*)$.

($\Longleftarrow$) Suppose that $I$ is a $n$-dimensional isotropic Bihom-ideal of $L$. By Lemma \ref{lemma3.1},  $[\beta(I),\beta(L),\alpha(I)]=0$. Let $B=L/I$ and $p: L \rightarrow B$ be the canonical projection. We can choose an isotropic complement subspace $B_{0}$ to $I$ in $L$, i.e. $L=B_{0}\dotplus I$ and $B_{0}\subseteq B_{0}^{\bot}$. Then $B_{0}^{\bot}=B_{0}$ since dim$B_0=n$.

Denote by $p_{0}$ (resp. $p_{1}$) the projection $L=B_{0}\dotplus I \rightarrow B_{0}$ (resp. $L=B_{0}\dotplus I\rightarrow I$) and let $q_{L}^{*}:I \rightarrow B^{*}$ is a linear map, where $q_{L}^{*}(i)(\bar{x}):= q_{L}(i,x),~\forall\, i\in I, \bar{x}\in B=L/I$.
 We claim that $q_{L}^{*}$ is a vector space isomorphism. In fact, if $\bar{x}=\bar{y}$, then $x-y\in I$, hence $q_{L}(i,x-y)\in q_{L}(I,I)=0$ and
 so $q_{L}(i,x)=q_{L}(i,y)$, which implies $q_{L}^{*}$ is well-defined and it is easy to see that $q_{L}^{*}$ is linear. If
 $q_{L}^{*}(i)=q_{L}^{*}(j)$, then $q_{L}^{*}(i)(\bar{x})=q_{L}^{*}(j)(\bar{x}), ~\forall\, x\in L$, i.e. $q_{L}(i,x)=q_{L}(j,x)$,
 which implies $i-j\in L^\bot=0$, hence $q_{L}^{*}$ is injective. Note that $\dim I=\dim B^*=n$, then $q_{L}^{*}$ is surjective.

In addition, $q_{L}^{*}$ has the following property, $\forall\, x,y,z\in L, i\in I$,
\begin{eqnarray*}
q_{L}^{*}([\beta(x),\beta(y),\alpha(i)])(\bar{\alpha}(\bar{z}))
&=&q_{L}([\beta(x),\beta(y),\alpha(i)],\alpha(z))\\
&=&-q_{L}(\alpha(i),[\beta(x),\beta(y),\alpha(z)])\\
&=&-q_{L}^{*}(\alpha(i))(\overline{[\beta(x),\beta(y),\alpha(z)]})\\
&=&-q_{L}^{*}(\alpha(i))([\overline{\beta(x)},\overline{\beta(y)},\overline{\alpha(z)}])\\
&=&-q_{L}^{*}(\alpha(i))\ad(\overline{\beta(x)},\overline{\beta(y)})(\overline{\alpha(z)})\\
&=&\ad^*(\overline{\beta(x)},\overline{\beta(y)})q_{L}^{*}(\alpha(i))(\overline{\alpha(z)}).
\end{eqnarray*}
A similar computation shows that
$$q_{L}^{*}([\beta(x),\beta(i),\alpha(y)])=-\ad^*(\overline{\beta(x)},\overline{\beta(y)})q_{L}^{*}(\alpha(i)),$$
$$q_{L}^{*}([\beta(i),\beta(x),\alpha(y)])=\ad^*(\overline{\beta(x)},\overline{\beta(y)})q_{L}^{*}(\alpha(i)).$$
Define a $3$-linear map
\begin{eqnarray*}
\theta:~~~B\times B\times B&\longrightarrow&B^{*}\\
(\bar{b_1},\bar{b_2},\bar{b_3})&\longmapsto&q_{L}^{*}(p_{1}([b_1,b_2,b_3])),
\end{eqnarray*}
where $b_1,b_2,b_3\in B_{0}.$ Then $\theta$ is well-defined since $p|_{B_0}$ is a vector space isomorphism.

Now define the bracket $[\cdot,\cdot,\cdot]_\theta$ on $B\oplus B^*$ by Proposition \ref{prop1}, we have $B\oplus B^*$
is a algebra. Let $\varphi:L \rightarrow B\oplus B^{*}$ be a linear map defined by $\varphi(x+i)=\bar{x}+q_{L}^{*}(i),~ \forall\, x+i\in B_0\dotplus I=L. $ Since $p|_{B_0}$ and $q_{L}^{*}$ are vector space isomorphisms, $\varphi$ is also a vector space isomorphism. Note that $\varphi\alpha(x+i)=\varphi(\alpha(x)+\alpha(i))=\overline{\alpha(x)}+q_{L}^{*}(\alpha(i))=\overline{\alpha(x)}+q_{L}^{*}(i)\bar{\alpha}
 =(\bar{\alpha}+\tilde{\bar{\alpha}})(\bar{x}+q_{L}^{*}(i))=(\bar{\alpha}+\tilde{\bar{\alpha}})\varphi(x+i)$, i.e. $\varphi\alpha=(\bar{\alpha}+\tilde{\bar{\alpha}})\varphi$. By the same way, $\varphi\beta=(\bar{\beta}+\tilde{\bar{\beta}})\varphi$. Furthermore, $\forall\, x,y,z\in L$, $i,j,k\in I$,
\begin{eqnarray*}
&&\varphi([\beta(x+i),\beta(y+j),\alpha(z+k)])\\
&=&\varphi([\beta(x)+\beta(i),\beta(y)+\beta(j),\alpha(z)+\alpha(k)])\\
&=&\varphi([\beta(x),\beta(y),\alpha(z)]+[\beta(x),\beta(y),\alpha(k)]+[\beta(x),\beta(j),\alpha(z)]+[\beta(x),\beta(j),\alpha(k)]\\
&&+[\beta(i),\beta(y),\alpha(z)]
+[\beta(i),\beta(y),\alpha(k)]+[\beta(i),\beta(j),\alpha(z)]+[\beta(i),\beta(j),\alpha(k)])\\
&=&\varphi([\beta(x),\beta(y),\alpha(z)]+[\beta(x),\beta(y),\alpha(k)]+[\beta(x),\beta(j),\alpha(z)]+[\beta(i),\beta(y),\alpha(z)])\\
&=&\varphi(p_{0}([\beta(x),\beta(y),\alpha(z)])\!+\!p_{1}([\beta(x),\beta(y),\alpha(z)])\!+\![\beta(x),\beta(y),\alpha(k)]\!+\![\beta(x),\beta(j),\alpha(z)]\\
&&+[\beta(i),\beta(y),\alpha(z)])\\
&=&\overline{[\beta(x),\beta(y),\alpha(z)]}+q^*_L\big(p_{1}([\beta(x),\beta(y),\alpha(z)])+[\beta(x),\beta(y),\alpha(k)]+[\beta(x),\beta(j),\alpha(z)]\\
&&+[\beta(i),\beta(y),\alpha(z)]\big)\\
&=&\overline{[\beta(x),\beta(y),\alpha(z)]}+\theta(\overline{\beta(x)},\overline{\beta(y)},\overline{\alpha(z)})+\ad^*(\overline{\beta(x)},\overline{\beta(y)})q_{L}^{*}(\alpha(k))\\
&&-\ad^*(\overline{\beta(x)},\overline{\beta(z)})q_{L}^{*}(\alpha(j))+\ad^*(\overline{\beta(y)},\overline{\beta(z)})q_{L}^{*}(\alpha(i))\\
&=&[\overline{\beta(x)}+q_{L}^{*}(\beta(i)),\overline{\beta(y)}+q_{L}^{*}(\beta(j)),\overline{\alpha(z)}+q_{L}^{*}(\alpha(k))]_\theta\\
&=&[\varphi(\beta(x)+\beta(i)),\varphi(\beta(y)+\beta(j)),\varphi(\alpha(z)+\alpha(k))]_\theta.
\end{eqnarray*}
Then $\varphi$ is an isomorphism of algebras, and $(B\oplus B^{*},[\cdot,\cdot,\cdot]_\theta, \bar{\alpha}+\tilde{\bar{\alpha}},\bar{\beta}+\tilde{\bar{\beta}})$ is a $3$-Bihom-Lie algebra.
Furthermore, we have
{\setlength\arraycolsep{2pt}
\begin{eqnarray*}
q_{B}(\varphi(x+i),\varphi(y+j))&=&q_{B}(\bar{x}+q_{L}^{*}(i),\bar{y}+q_{L}^{*}(j))\\
&=&q_{L}^{*}(i)(\bar{y})+q_{L}^{*}(j)(\bar{x})\\
&=&q_{L}(i,y)+q_{L}(j,x)\\
&=&q_{L}(x+i,y+j),
\end{eqnarray*}}
then $\varphi$ is isometric. And $\forall\, x,y,z,w\in L$, the relation
\begin{eqnarray*}
&&q_{B}([(\bar{\beta}+\tilde{\bar{\beta}})(\varphi(x)),(\bar{\beta}+\tilde{\bar{\beta}})(\varphi(y)),(\bar{\alpha}+\tilde{\bar{\alpha}})(\varphi(z))]_\theta,(\bar{\alpha}+\tilde{\bar{\alpha}})(\varphi(w)))\\
&=&q_{B}([\varphi(\beta(x)),\varphi(\beta(y)),\varphi(\alpha(z))]_\theta,\varphi(\alpha(w)))=q_B(\varphi([\beta(x),\beta(y),\alpha(z)]),\varphi(\alpha(w)))\\
&=&q_{L}([\beta(x),\beta(y),\alpha(z)],\alpha(w))=-q_{L}(\alpha(z),[\beta(x),\beta(y),\alpha(w)])\\
&=&-q_{B}(\varphi(\alpha(z)),[\varphi(\beta(x)),\varphi(\beta(y)),\varphi(\alpha(w))]_\theta)\\
&=&-q_{B}((\bar{\beta}+\tilde{\bar{\beta}})(\varphi(z)),[(\bar{\beta}+\tilde{\bar{\beta}})(\varphi(x)),(\bar{\beta}+\tilde{\bar{\beta}})(\varphi(y)),(\bar{\alpha}+\tilde{\bar{\alpha}})(\varphi(w))]_\theta),
\end{eqnarray*}
 which implies that $q_B$ is $\alpha\beta$-invariant. So $(B\oplus B^{*}, q_B,\bar{\beta}+\tilde{\bar{\beta}},\bar{\alpha}+\tilde{\bar{\alpha}})$ is a quadratic $3$-Bihom-Lie algebra.
Thus, the $T_\theta^*$-extension $(B\oplus B^{*}, q_B,\bar{\beta}+\tilde{\bar{\beta}},\bar{\alpha}+\tilde{\bar{\alpha}})$ of $B$ is isometric to $(L,q_{L},\alpha,\beta)$.
\end{proof}

\end{document}